\documentclass[preprint,12pt]{elsarticle}
\usepackage{amsmath, amsfonts, amssymb, amsthm}
\usepackage{fullpage}
\usepackage[ruled,vlined]{algorithm2e}
\nocaptionofalgo
\usepackage{graphicx}
\setcounter{tocdepth}{1}

\usepackage{comment}


\newtheorem{thm}{Theorem}
\newtheorem{lem}[thm]{Lemma}
\newtheorem{cor}[thm]{Corollary}
\newtheorem{prop}[thm]{Proposition}
\newtheorem{conj}[thm]{Conjecture}
\theoremstyle{definition}
\newtheorem{defn}[thm]{Definition}
\newtheorem{prob}{Problem}


\newcommand{\st}{\;|\;}
\newcommand{\es}{ {\varnothing}}

\newcommand{\bi}{\begin{itemize}}
\newcommand{\ei}{\end{itemize}}
\newcommand{\be}{\begin{enumerate}}
\newcommand{\ee}{\end{enumerate}}
\newcommand{\bc}{\begin{center}}
\newcommand{\ec}{\end{center}}
\newcommand{\bt}{\begin{tabular}}
\newcommand{\et}{\end{tabular}}
\newcommand{\ba}{\begin{array}}
\newcommand{\ea}{\end{array}}
\newcommand{\bp}{\begin{proof}}
\newcommand{\ep}{\end{proof}}

\newcommand{\ra}{\rightarrow}

\newcommand{\N}{\mathbb N}

\newcommand{\LL}{\left[ {L \atop L} \right]}
\newcommand{\LN}{\left[ {L \atop N} \right]}
\newcommand{\LX}{\left[ {L \atop X} \right]}

\newcommand{\NL}{\left[ {N \atop L} \right]}
\newcommand{\NN}{\left[ {N \atop N} \right]}
\newcommand{\NX}{\left[ {N \atop X} \right]}

\newcommand{\XL}{\left[ {X \atop L} \right]}
\newcommand{\XN}{\left[ {X \atop N} \right]}
\newcommand{\XX}{\left[ {X \atop X} \right]}

\newcommand{\RR}{\left[ {R \atop R} \right]}
\newcommand{\II}{\left[ {I \atop I} \right]}
\journal{Journal of Algebra: Computational Algebra}

\begin{document}

\begin{frontmatter}

\title{Counting elements and geodesics\\ in Thompson's group~$F$}

 \author{Murray Elder\corref{cor1}}
 \ead{murrayelder@gmail.com}
\cortext[cor1]{Corresponding author}
 \address{School of Mathematics and Physics, University of Queensland, Brisbane, Australia}

 \author{\'Eric Fusy}
 \ead{fusy@lix.polytechnique.fr}
 \address{LIX, \'Ecole Polytechnique, Paris, France}
 
  \author{Andrew Rechnitzer}
 \ead{andrewr@math.ubc.edu}
 \address{Department of Mathematics, University of British Columbia, Vancouver,
Canada}

\begin{abstract}
We present two quite different algorithms to compute the number of elements in the sphere 
of radius $n$ of Thompson's group~$F$ with standard generating set. The first of these
requires exponential time and polynomial space, but additionally computes the number of
geodesics and is generalisable to many other groups.

The second algorithm requires polynomial time and space and allows us to compute the size
of the spheres of radius $n$ with $n \leq 1500$. Using the resulting series data we find
that the growth rate of the group is bounded above by $2.62167\ldots $. This is very close
to Guba's lower bound of $\tfrac{3+\sqrt{5}}{2}$ \cite{Guba2004}. Indeed, numerical
analysis of the series data strongly suggests that the growth rate of the group is
exactly~$\tfrac{3+\sqrt{5}}{2}$.
\end{abstract}

\begin{keyword}
Group growth function \sep growth series \sep geodesic growth series \sep
Thompson's group~$F$

\MSC[2008] 20F65 \sep 05A05
\end{keyword}

\end{frontmatter}

\section{Introduction}\label{intro}
Let $G$ be a group with finite generating set $X$. Recall that $f:\N\ra \N$ is the {\em
(spherical) growth function} for $(G,X)$ if $f(n)$ is the number of elements in the sphere
of radius $n$ of the corresponding Cayley graph. Define the {\em (spherical) geodesic
growth function} to be $g:\N\ra \N$ where $g(n)$ is the number of all geodesics of length
$n$ in the Cayley graph \cite{GrigTat}. In this article we give two quite different
algorithms to compute the growth function of Thompson's group~$F$. The first, Algorithm~A,
applies to a range of groups, and computes both $f(n)$ and $g(n)$. It runs in exponential
time and polynomial space, and is implemented  to compute the first 23 terms of both
functions with moderate computer resources. This algorithm is based on a random
sampling algorithm developed by the third author and van Rensburg for problems in lattice
statistical mechanics \cite{GARM}.

The second algorithm, Algorithm~B, is specific to Thompson's group~$F$, and computes
$f(n)$ in  polynomial time and space. It is based on the {\em forest diagram}
representation of elements of~$F$, introduced by Belk \& Brown \cite{BelkBrown2003}, and
the associated length formula, which itself is based on work of  Fordham 
\cite{Fordham2003}. We implemented this algorithm and were able to compute the first 1500
terms of $f(n)$, again with moderate computer resources. This data enables us to obtain an
upper bound of $2.62167\ldots$ on the growth rate of~$F$, which differs by only $0.15\%$
from the lower bound of $\frac{3+\sqrt{5}}{2}$ obtained by Guba  \cite{Guba2004}. 
Based on this and other numeric evidence, we
conjecture that Guba's bound is indeed the correct growth rate. Both sets of data (the
growth up to 1500 and geodesic growth up to 22) are published on the Online Encyclopedia
of Integer Sequences as A156945 and A156946 \cite{OEIS}.

Note that when the spherical (geodesic) growth is exponential, the
growth rate coincides with the usual growth and geodesic growth rates
which count the number of elements (geodesics) of length at most $n$ rather than exactly
$n$.

Many researchers have made an attempt to compute the growth of Thompson's group.
Guba \cite{Guba2004}  used a brute force technique to compute the first 9 terms of the
growth series. This was extended to the first 13 by Burillo, Cleary and Weist
\cite{Burillo2007}. Matucci considered a system of recurrences obtained by considering
forest diagrams in his thesis  \cite{Matucci}, which turned out to be quite complicated.
It is possible that his approach could be used to
compute the growth series but this does not appear to have been pursued.
Our own approach is to set up an algorithm (being Algorithm~B) that enumerates the
number of elements in the sphere of radius $n$ by computing the number of forest diagrams
of weight $n$. Using this we have been able to compute a large number of terms. One key
ingredient in our algorithm is the encoding of forest diagrams by labelling both the gaps
between leaves (as Belk \& Brown do) as well as the {\em internal nodes} of trees, which
is based on an encoding of binary trees described in~\cite{Felsner2008}.

Unfortunately, we have not been able to leverage our algorithm into a closed form
expression for the growth function or the corresponding generating function. However, an
analysis of our data indicates that the series does not correspond to a
simple\footnote{\emph{ie} satisfying an equation of low order which
has coefficients of low degree.} rational, algebraic or differentiably-finite generating
function (satisfying a linear ordinary differential equation with polynomial coefficients
--- see \cite{StanleyECV2}). It is entirely possible that the generating function lies
outside these classes of functions and that no reasonable 
 closed form
solution exists. See \cite{MBMMP_2000, MBMMP_2003} for examples of
problems with similar (but far simpler) recurrences that have surprisingly complicated
solutions. 

The paper is organised as follows. In Section \ref{sec:AlgA} we describe the first algorithm, 
which computes in exponential time 
the coefficients of the growth and geodesic growth functions of any group with an efficient solution to
a certain {\em geodesic problem}. We apply this to Thompson's group~$F$ and display our
results. In Section \ref{sec:AlgB} we describe the second, polynomial time algorithm which
we have used to compute the first 1500 terms of the growth function of~$F$. We describe
the forest
diagram construction, how it leads to our enumeration algorithm,  and give the
results. We then prove an upper bound for the growth rate. In Section \ref{sec:outlook} we
summarise our findings, and in Appendix~\ref{appendix} we give more detailed pseudocode
for our algorithms.

We work with the presentation
\[\left\langle \;
x_0, \; x_1 \;\; | \;\; [x_0x_1^{-1},x_0^{-1}x_1x_0],[x_0x_1^{-1}, x_0^{-2}x_1x_0^2]\;
\right\rangle
\]
for Thompson's group~$F$.   We refer the reader to \cite{CFP} and \cite{CombCT}
for an introduction to Thompson's group. 

The authors thank Jos\'e Burillo, Sean Cleary, Martin Kassabov, Manuel Kauers, Francesco
Mattucci, Buks van~Rensburg and Jennifer Taback  for fruitful discussions and ideas, and
the anonymous reviewer for their careful reading and very helpful feedback.

\section{Algorithm A}\label{sec:AlgA}

We begin with the following lemma, due to the third author and van Rensburg in \cite{GARM},  
 which arises in the context of randomly sampling
self-avoiding walks and polygons using a generalisation of the Rosenbluth
method \cite{PMMC, Rosenbluth}. In particular, it allows us to enumerate objects
without having to use a unique construction.
\begin{defn} Let $G$ be a group with finite generating set $X$. Given a word
  $w$ in the generators of the group define
  \begin{align*}
  d_-(w) &= \{ x \in X \st \ell(w)>\ell(w x) \} \\
  d_+(w) &= \{ x \in X \st \ell(w)<\ell(w x) \} \\
  d_0(w) &= \{ x \in X \st \ell(w)=\ell(w x) \},
  \end{align*}
  where $\ell(w)$ is the geodesic length of the element represented by $w$.
  These are the subsets of the generators that shorten, lengthen and do no
change the geodesic length. Note that if the group only has relations of even length (as
is the case for~$F$) then
$d_0(w) \equiv \emptyset$.
\end{defn}

\begin{lem}[From \cite{GARM}]\label{lem:GARM}
  Let $\Gamma_n$ be the set of all geodesic words of length $n$. Given a word
  $w$ from this set, let $w_i$ be its prefix of length $i$. The size of the
  sphere of radius  $n$ (ie the  number of distinct elements whose geodesic
  length is $n$) is given by
  \begin{align*}
  | S(n) | 
  &= \sum_{w \in \Gamma_n} \prod_{i=1}^n \frac{1}{|d_-(w_i)|}.
  \end{align*}
\end{lem}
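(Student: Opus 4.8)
The plan is to set up a bijective correspondence between the sphere $S(n)$ and the set of geodesic words $\Gamma_n$ via a "weight-splitting" argument, where each element is reached by possibly many geodesics, but the weights $\prod_{i=1}^n 1/|d_-(w_i)|$ assigned to those geodesics sum to exactly $1$. Concretely, I would prove the equivalent statement
\[
\sum_{w \in \Gamma_n,\ \overline{w} = g} \ \prod_{i=1}^n \frac{1}{|d_-(w_i)|} = 1 \qquad \text{for every } g \in S(n),
\]
where $\overline{w}$ denotes the group element represented by $w$; summing this over all $g \in S(n)$ then yields the lemma. So the heart of the matter is: the total weight of all geodesics from the identity to a fixed element $g$ at distance $n$ is $1$.

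**First I would** reorganise the sum over geodesics ending at $g$ as a sum over the last generator. A geodesic word $w = w' x$ of length $n$ with $\overline{w} = g$ corresponds to a geodesic word $w'$ of length $n-1$ ending at some neighbour $h$ of $g$ with $\ell(h) = n-1$, together with a generator $x \in X$ such that $hx = g$; and the condition $\ell(w' x) > \ell(w')$ together with the fact that $g$ has distance $n$ means precisely that $x \in d_+(w')$, equivalently (since $g$ is at distance $n$ and no relations of odd length occur, so $d_0 = \emptyset$) that $x^{-1} \in d_-(w)$ viewed from $g$. The key combinatorial observation is that the number of generators $x$ with $\ell(g x^{-1}) = n-1$ — i.e.\ the number of geodesic "last steps" into $g$ — is exactly $|d_-(u)|$ for any geodesic word $u$ representing $g$, because $d_-(u)$ depends only on the element $\overline{u} = g$, not on the particular word $u$. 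This means the factor $1/|d_-(w_n)| = 1/|d_-(\text{word for }g)|$ is constant over all geodesics ending at $g$.

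**Then I would** run the induction on $n$. Let $W(g) = \sum_{w \in \Gamma_{\ell(g)},\ \overline w = g} \prod_i 1/|d_-(w_i)|$ be the total geodesic weight of $g$; I claim $W(g) = 1$ for all $g$. The base case $n=0$ is trivial ($g = e$, the empty word, empty product equal to $1$). For the inductive step, decompose each geodesic to $g$ through its penultimate element:
\[
W(g) = \sum_{\substack{h : \ell(h) = n-1 \\ h^{-1}g \in X}} \ \sum_{\substack{w' \in \Gamma_{n-1} \\ \overline{w'} = h}} \left( \prod_{i=1}^{n-1} \frac{1}{|d_-(w'_i)|} \right) \cdot \frac{1}{|d_-(g)|} = \frac{1}{|d_-(g)|} \sum_{\substack{h : \ell(h) = n-1 \\ h^{-1} g \in X}} W(h),
\]
where I have pulled out the constant factor $1/|d_-(g)|$ using the observation above and used that $|d_-(g)|$ equals the number of such neighbours $h$. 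By the inductive hypothesis each $W(h) = 1$, so the inner sum is just the count of geodesic predecessors of $g$, which is $|d_-(g)|$, and hence $W(g) = 1$. Summing $W(g) = 1$ over all $g \in S(n)$ gives $|S(n)| = \sum_{g \in S(n)} \sum_{w} \prod_i 1/|d_-(w_i)| = \sum_{w \in \Gamma_n} \prod_{i=1}^n 1/|d_-(w_i)|$, as required.

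**The main obstacle** I anticipate is making the "constant factor" observation fully rigorous: one must check carefully that $d_-(w)$ is genuinely a function of the group element $\overline w$ alone (immediate from the definition, since $\ell$ is a function on group elements), and — more substantively — that when $g$ has geodesic length exactly $n$, every generator $x$ with $\ell(gx) = n-1$ gives rise to a geodesic word for $g$ through $gx$, so that the set of "geodesic last steps into $g$" is in natural bijection with $d_-(g)$ and is nonempty (so the division is legitimate). This uses that geodesic subwords of geodesics are geodesic, which is standard. The absence of odd-length relators (hence $d_0 = \emptyset$) is a convenience ensuring $\ell(gx) \in \{n-1, n+1\}$ but is not essential to the argument; one only needs $|d_-(g)| \neq 0$ for $g \neq e$, which is clear.
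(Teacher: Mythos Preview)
Your proof is correct and follows essentially the same route as the paper: reduce to showing that the weighted sum over all geodesics to a fixed $g\in S(n)$ equals $1$, then induct on $n$ by peeling off the last step, using that $d_-(w)$ depends only on the endpoint $\overline{w}=g$ so that the factor $1/|d_-(g)|$ pulls out and cancels against the number of geodesic predecessors. Your write-up is in fact slightly more careful than the paper's (you make the dependence of $d_-$ on the group element explicit and note why the division is legitimate), and your choice of base case $n=0$ versus the paper's $n=1$ is immaterial.
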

\begin{proof}
  We prove this result by induction on $n$. When $n=1$, the set of geodesics
  is just the set of generators and so expression is true.

  Let $g$ be an element of $S(n)$ and let $\Gamma_n(g)$ be the set of geodesic
  paths from the identity to $g$.  It then suffices to show that
  \begin{align*}
  \sum_{w \in \Gamma_n(g)} \prod_{i=1}^n \frac{1}{|d_-(w_i)|} &= 1.
  \end{align*}
  Every geodesic ending at $g$ can be written as the product of a geodesic in
  $\Gamma_{n-1}$ and a generator. Thus we can write
  \begin{align*}
  \sum_{w \in \Gamma_n(g)} \prod_{i=1}^n \frac{1}{|d_-(w_i)|}
  &= 
  \frac{1}{|d_-(w)|} \sum_{w \in \Gamma_n(g)}
  \prod_{i=1}^{n-1} \frac{1}{|d_-(w_i)|} \\
  &= \sum_{x \in X} \frac{1}{|d_-(w)|} 
  \sum_{v \in \Gamma_{n-1}(gx^{-1})}
  \prod_{i=1}^{n-1} \frac{1}{|d_-(v_i)|}
  \end{align*}
  The inner sum is zero when the set $\Gamma_{n-1}(gx^{-1})$ is empty. The
  induction hypothesis implies that the inner sum is equal to $1$ when the set
  is non-empty (ie when $\ell(g x^{-1}) = n-1$). Exactly $|d_-(w)|$ of the
  sets are non-empty and so the result follows.
\end{proof}

\subsection{Description of the algorithm}

The above lemma allows us to compute the size of $S(n)$ much more efficiently
than brute-force methods. The time complexity is proportional to the size of
$\Gamma_n$ and the time to compute $d_\pm(w)$. The memory required is
significantly reduced; we only require space to compute $d_\pm(w)$ and the
current geodesic word. This is substantially better than brute force methods; we
do not have to store normal forms for all the elements in $S(0), \dots S(n)$.

Suppose that $G$ has an inverse closed generating $X$ set of size $m$, and that there is
an algorithm to compute $d_-(u),d_+(u)$ for any geodesic $u$ in this generating set. Fix
an order on the generators of $X$ (this also fixes a lexicographic order on the set of
all geodesic words). Then the following algorithm computes the number of elements of
length $n$.

\vspace{3mm}

\noindent \hrulefill 

\noindent \textbf{Algorithm A.} Outputs the number of elements in the $n$th sphere.  

\vspace{-3mm}
\noindent \hrulefill 
\vspace{3mm}

\noindent Set a counter to 0.

\vspace{3mm}

\noindent Starting from the empty word, run through all geodesics of length~$n$.

\vspace{3mm}

\noindent Given the current geodesic, $u$, compute $|d_-(u_i)|$ for each length $i$
prefix of $u$, and add $\prod_{i=1}^n \frac{1}{|d_-(u_i)|}$ to the counter.

\vspace{3mm}

\noindent By Lemma \ref{lem:GARM} the final value of the counter is the number of elements
in the sphere of radius~$n$.

\vspace{-3mm}
\noindent \hrulefill

\vspace{3mm}

There are a number of ways of running through the list of all geodesics; perhaps the
simplest is to traverse all the geodesics in lexicographic order using a recursive
depth-first algorithm (see the pseudocode for \texttt{ComputeGeod()} and
\texttt{ComputeSphere()} in Appendix~\ref{app A}). It can also be done using a slightly
more involved iterative procedure that given a geodesic finds the next geodesic in the
lexicographic ordering (see the pseudocode for \texttt{NextGeod()} also in
Appendix~\ref{app A}).

Note that the above algorithm is easily modified to obtain the number of geodesics
of length $n$: rather than adding $\prod_{i=1}^n \frac{1}{|d_-(u[i])|}$ to the counter
each time, simply add~$1$.

\begin{prop}
The time complexity of Algorithm  A is $O(|\Gamma_n|)$,  multiplied by the time to
compute $d_-,d_+$ at each step. The space complexity is linear plus the space required to
compute $d_-,d_+$.
\end{prop}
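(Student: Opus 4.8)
The plan is to read off the running time directly from the recursive depth-first procedure of Appendix~\ref{app A}, whose execution is a tree traversal. The root is the empty word; the children of a node holding a geodesic word $u$ of length $i<n$ are the words $ux$ with $x\in d_+(u)$ (recall that a geodesic can be prolonged to a longer geodesic only by a generator in $d_+$, and $d_0\equiv\emptyset$ for $F$). The leaves at depth $n$ are exactly the elements of $\Gamma_n$, and a summand is added to the counter precisely when such a leaf is reached. So the first step is to identify the number of ``main steps'' of the algorithm with $|\Gamma_n|$, the number of depth-$n$ leaves of this tree.

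The second step is to bound the work done at a single node holding a word $u$ of length $i$: the algorithm evaluates $d_+(u)$ (to list the children) and $|d_-(u)|$ (the factor entering the product). If the partial product $\prod_{j\le i}1/|d_-(u_j)|$ is carried down the recursion as an extra argument, then maintaining it costs $O(1)$ per node, and adding a completed product to the counter at a depth-$n$ leaf is also $O(1)$. Hence the per-node cost is $O(1)$ plus one evaluation of $d_\pm$, and the total time is $O(\#\text{nodes})$ such evaluations. Every node at depth $i$ is an ancestor of some depth-$n$ leaf, and distinct depth-$i$ nodes are prefixes of disjoint subsets of $\Gamma_n$, so there are at most $|\Gamma_n|$ nodes at each depth; since the geodesic growth of $F$ is exponential one has $\sum_{i\le n}|\Gamma_i|=O(|\Gamma_n|)$, which yields the stated bound of $O(|\Gamma_n|)$ multiplied by the cost of one $d_\pm$ computation. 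For the space bound, observe that the recursion has depth at most $n$, each stack frame stores only the current generator, the current partial product and a pointer into the word, and there is a single global counter; this is $O(n)$, to which one adds whatever scratch space the $d_\pm$ subroutine itself requires --- giving ``linear plus the space required to compute $d_-,d_+$''.

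The step I expect to need the most care is the bookkeeping that collapses the naive count of roughly $n|\Gamma_n|$ prefix evaluations into a clean $O(|\Gamma_n|)$. Two observations do it: first, along any root-to-leaf path the $n$ prefixes are nested, so if the length routine processes a word from left to right --- producing $d_\pm$ at all of its prefixes in one pass --- then ``the time to compute $d_\pm$'' for a length-$n$ word already subsumes those $n$ evaluations; second, the dead-end branches (geodesics $u$ with $|u|<n$ but $d_+(u)=\emptyset$) visited by the depth-first search, like the extra factor $n$, are absorbed into lower-order terms precisely because $|\Gamma_n|$ grows exponentially. I would also note that the iterative driver \texttt{NextGeod()} admits the same analysis, with the same amortized cost per geodesic produced, and that for a group whose geodesic growth is not exponential the same argument gives the (weaker, but still correct) bound $O\!\left(\sum_{i\le n}|\Gamma_i|\right)$ times the cost of $d_\pm$.
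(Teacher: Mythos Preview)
Your argument is correct and follows the same depth-first-traversal analysis as the paper; indeed you are more careful than the paper's two-sentence proof, which simply asserts that the time is proportional to $|\Gamma_n|$ without discussing the intermediate prefixes. Your explicit use of exponential geodesic growth to collapse $\sum_{i\le n}|\Gamma_i|$ into $O(|\Gamma_n|)$, and your remark that without exponential growth one only gets the cumulative bound, are worthwhile refinements that the paper leaves implicit.
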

\bp
As we run through all geodesics $u$, we only need space to store the {\em current geodesic
word}, which requires linear space, plus the space required to compute $d_-,d_+$. Since we
must consider all geodesics, the time is proportional to the number of geodesics in
$|\Gamma_n|$, multiplied by the time to compute $d_-, d_+$ at each step.
\ep

So the time and space complexity of the algorithm depends on the complexity of computing
$d_+$ and $d_-$.  This naturally leads to the geodesic problems described in the next
section.

\subsection{Geodesic problems}
Algorithm A gives a memory-efficient procedure to compute growth and geodesic growth for
any group that has an efficient solution to the following problem: 

\begin{prob}
Given a geodesic $u$ and generator $x$, determine if $x\in d_+(u), d_0(u)$ or $d_-(u)$.
\end{prob}
Note that if all the relators in $R$  have even length then these are the only two
possibilities, since if $x\in d_0(u)$ means there must be an odd length relator.
 So the problem turns into the decision problem:
\begin{prob}
Given a geodesic $u$ and generator $x$, decide if $x\in d_+(u)$.
\end{prob}
  If $(G,X)$ has a polynomial time and space algorithm to answer Geodesic Problem 1,
then Algorithm A computes both $f(n)=|S(n)|$ and $g(n)=|\Gamma(n)|$ in exponential
time, but polynomial space (assuming $g$ grows exponentially). In \cite{ElderGeodesics}
the authors consider these problems in more detail, and prove that they imply a solvable
word problem for groups whose set of relators is countably enumerable. Note that if a
group has an efficient solution to its word problem, then a naive brute force computation
of $f(n)$ and $g(n)$ would still require an exponential amount of memory (assuming the
functions $f$ and $g$ are exponential), so a polynomial space algorithm is a marked
improvement on this.

\subsection{Thompson's group~$F$}

For Thompson's group~$F$, one can use the geodesic length formula of Fordham
\cite{Fordham2003} to compute the length of  any word in polynomial time and space.
There are now other similar formulations, and we note those due to Guba \cite{Guba2004}
and Belk \& Brown \cite{BelkBrown2003}. We chose to base both of our algorithms on the
forest diagrams of Belk \& Brown as we found them the easiest to implement. We note that
there is a simple bijection between this representation and binary tree pairs. It is
possible to develop similar algorithms based on the Fordham and Guba formulations, but we
have not pursued these possibilities because we believe that they will give very similar
results.

We start our description of the algorithm by first explaining the forest-diagram length
formula of Belk \& Brown (we refer the reader to the original paper \cite{BelkBrown2003}
for a fuller explanation). In Figure~\ref{fig caretdef} we describe some of the
terminology associated with this formula.
\begin{figure}[h!]
 \centering
 \includegraphics[scale=0.5]{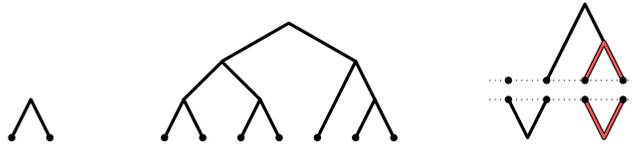}
 \caption{(left) A caret is a inner node of a binary tree together with the two out-going
edges. (middle) A binary tree consisting of 6 carets and so 6 inner nodes and 7 leaves.
(right) A pair of forests with a pair of common carets highlighted; such a diagram is not
admissible.}
 \label{fig caretdef}
\end{figure}

Consider the forest diagram in Figure~\ref{fig bbexample} (which is Example~4.2.8 from
\cite{BelkBrown2003}).
\begin{figure}[h!]
 \centering
 \includegraphics[scale=0.5]{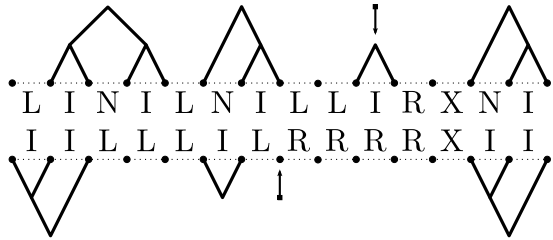}
 \caption{A forest diagram. The gaps are labelled according to a simple set of
rules. The length of the corresponding element of Thompson's group is computed
by considering the label-pairs in each column. In this case the element has
length $[2,2,2,2,2,4,2,1,1,1,2,2,4,2] = 29$.}
 \label{fig bbexample}
\end{figure}
It consists of two rows
of forests of binary trees, above and below two rows of labelled gaps. One (possibly
empty) tree in each forest is pointed and the diagram cannot contain ``common carets''
(see Figure~\ref{fig caretdef}). A pair of carets in a given column are considered
to be common carets, if they are both immediate neighbours of the gaps. Further, the
extreme left and right columns of the diagram cannot be empty. Each element of~$F$
corresponds uniquely to such  a  forest diagram with no common carets.
 
The labels of the gaps are determined by the structure of the forests. We found it
convenient to make a slight modification of the original labelling rules; we label gaps by
applying the following rules in order:
\renewcommand{\theenumi}{(\roman{enumi})}
\begin{enumerate}
 \item ``L'' if it is left of the pointer and exterior (outside a tree),
 \item ``N'' if it is immediately left of a caret and interior (inside a tree),
 \item ``X'' if it is immediately left of a caret and exterior,
 \item ``R'' if it is right of the pointer and exterior,
 \item ``I'' if it is interior.
\end{enumerate}
Belk \& Brown give ``N'' and ``X'' the same label, however the above labelling means
that ``L'', ``R'' and ``X'' are always exterior, while ``I'' and ``N'' are always
interior. The length of a group element represented by a
diagram $u$ is then
\begin{align}
  \ell(u) &= \sum_{c \in \mathrm{columns}} W(c)
\end{align}
where $W(c)$ is the weight of a column as given in Table~1. We refer to this sum as the
weight of a given forest diagram.
\begin{center}
\begin{table}[h!]
\large
\begin{center}
\begin{tabular}{|l||c|c|c|c|c|}
\hline
 $\circ$ & I & N & L & R & X \\
 \hline\hline
       I &  2 & 4 & 2 & 1 & 3 \\
       \hline
       N &  4 & 4 & 2 & 3 & 3 \\
       \hline
       L &  2 & 2 & 2 & 1 & 1 \\
       \hline
       R &  1 & 3 & 1 & 2 & 2 \\
       \hline
       X &  3 & 3 & 1 & 2 & 2 \\
       \hline
\end{tabular}
\end{center} 
\caption{The weight of a column is given by the above simple function of the
gap labels.}
\end{table}
\end{center} 
Our table differs from that in Belk \& Brown \cite{BelkBrown2003} also in that our table
entries include the contribution from the number of carets, whereas Belk \& Brown add this
contribution separately.

It should be  noted that labels are defined very locally; to label a given column we do
not need to know the labels of columns far away. While this is not crucial for
Algorithm~A, it is critically important for Algorithm~B as it allows us to build a
diagram with its labels column-by-column. This will be discussed further below.

\subsection{Implementing Algorithm A for Thompson's group~$F$}

Propositions 3.3.1 and 3.3.5 of \cite{BelkBrown2003} describe how multiplication by each
of the  generators and their inverses changes a forest diagram. Multiplying a reduced
forest by $x_0^{\pm 1}$ simply moves the top pointer one tree to the right or
left which can be done in constant time. Multiplying by $x_1$ adds a single
caret to the roots of two adjacent trees, then canceling common carets if they exist.
Multiplying by $x_1^{-1}$ either adds or deletes a caret in the bottom or top forests, and
then cancels common carets if they exist. Note that after multiplying by $x_1^{\pm 1}$ at
most a single common caret exists.

So a word of length $n$ produces a forest of at most $n$ carets, so we can store the
forest diagram for such a word in space $O(n)$. We can turn this into an algorithm to
compute the geodesic length of an element of~$F$ as follows:

\vspace{3mm}

\noindent \hrulefill 

\noindent \textbf{Algorithm} \textsc{GeodesicLengthF}.  Computes the geodesic length of an  
element in~$F$ 

\vspace{-3mm}
\noindent \hrulefill 

\noindent Input a word  of length $n$ in the generators $x_0^{\pm 1},x_1^{\pm 1}$.
\vspace{3mm}

\noindent Start with the empty forest diagram (no carets, and both pointers at the 0
position).

\vspace{3mm}

\noindent For each letter of $u$, redraw the current forest diagram by multiplying by this
letter. To do this one must scan through the stored diagram and possibly add or delete a
caret; this takes $O(n)$ operations.

\vspace{3mm}

\noindent  When all letters are read, compute the weight of the resulting diagram by
labelling it (which takes $O(n)$ operations) then read the geodesic length off from
the table (which takes $O(n)$ operations).

\vspace{-3mm}
\noindent \hrulefill 

\vspace{3mm}

\begin{prop}
One can compute the geodesic length of a word in~$F$ in time $O(n^2)$ and space
$O(n)$.
\end{prop}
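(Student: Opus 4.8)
The plan is to read off the complexity directly from the Algorithm \textsc{GeodesicLengthF} just described, carefully accounting for the cost of each of its three phases: initialisation, letter-by-letter update of the forest diagram, and the final weight computation. First I would fix the data structure: a forest diagram for a word of length $n$ has at most $n$ carets (each letter contributes at most one caret, by Propositions 3.3.1 and 3.3.5 of \cite{BelkBrown2003} as recalled above), together with two pointer positions, so it can be stored in $O(n)$ space. Initialisation (the empty diagram) is $O(1)$, and in particular fits within $O(n)$ space, establishing the space bound.

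For the time bound, I would argue that each of the $n$ update steps costs $O(n)$. Multiplication by $x_0^{\pm 1}$ just shifts the top pointer one tree, which is $O(1)$ once we allow $O(n)$ to relocate within the stored structure; multiplication by $x_1^{\pm 1}$ adds or deletes one caret at a specified location and then cancels common carets. The key point, already noted in the excerpt, is that after multiplying a reduced diagram by $x_1^{\pm 1}$ at most a single common caret can arise, so the cancellation does not cascade and is itself an $O(n)$ scan-and-splice operation. Hence each letter is processed in $O(n)$ time, and reading all $n$ letters costs $O(n^2)$.

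Finally, once all letters are consumed, I would bound the cost of evaluating $\ell(u) = \sum_{c \in \mathrm{columns}} W(c)$. There are $O(n)$ columns (bounded by the number of leaves, hence by the number of carets plus one). Because the gap labels (i)--(v) are defined purely locally---a single left-to-right pass suffices to decide, for each gap, whether it is exterior or interior and whether it sits immediately left of a caret or left/right of the pointer---the labelling is an $O(n)$ computation. Summing the table entries $W(c)$ over the $O(n)$ columns is another $O(n)$ operations. Adding the three phases gives total running time $O(1) + O(n^2) + O(n) = O(n^2)$ and total space $O(n)$, which is the claim.

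The only genuinely substantive point, rather than bookkeeping, is the claim that a single update cannot trigger an unbounded chain of common-caret cancellations; but this is exactly the structural fact about $x_1^{\pm 1}$-multiplication quoted from \cite{BelkBrown2003} in the paragraph preceding the proposition, so I would simply invoke it rather than reprove it here.
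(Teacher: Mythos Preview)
Your argument is correct and follows essentially the same route as the paper's proof: store the forest diagram in $O(n)$ space, observe that each of the $n$ letter-updates costs $O(n)$ (scanning the diagram and cancelling at most one common caret), and then note that the final labelling and weight-summation are each $O(n)$. You have in fact spelled out more carefully than the paper why no cascade of cancellations can occur and why the labelling pass is linear, but the structure and the key estimates are identical.
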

\bp 
We store the forest diagram as a pair of two trees, and use depth-first search to run
through the trees. The size of the tree is $O(n)$ and it  takes $O(n)$ steps to
scan through and reduce a pair of common carets. So as we run through each letter of $u$
we perform $O(n)$ steps, so all together $O(n^2)$. The final step takes $O(n)$ to label
the forest diagram and $O(n)$ to compute the weight.
\ep

The above algorithm allows us to quickly compute the geodesic length of any element
of~$F$ (and so $d_\pm$); it formed the basis of our implementation of Algorithm~A. This
then allowed us to calculate both the growth function and geodesic growth function for
Thompson's group~$F$ up to length~$22$. See Table~3. Since~$F$ has exponentially many
geodesics (it has exponential growth) the time for Algorithm A is exponential, but space
is $O(n)$.

\begin{center}
\begin{table}[h!] 
\begin{center}
 \begin{tabular}{|c|r|r||c|r|r|}
 \hline
 $n$ & $|S(n)|$ & $|\Gamma_n |$ &
 $n$ & $|S(n)|$ & $|\Gamma_n |$ \\
 \hline\hline
0 & 1 & 1 & 12 & 431238 & 556932 \\
1 & 4 & 4 & 13 & 1180968 & 1588836 \\
2 & 12 & 12 & 14 & 3225940 & 4507524 \\
3 & 36 & 36 & 15 & 8773036 & 12782560 \\
4 & 108 & 108 & 16 & 23809148 & 36088224 \\
5 & 314 & 324 & 17 & 64388402 & 101845032 \\
6 & 906 & 952 & 18 & 173829458 & 286372148 \\
7 & 2576 & 2800 & 19 & 467950860 & 804930196 \\
8 & 7280 & 8132 & 20 & 1257901236 & 2255624360 \\
9 & 20352 & 23608 & 21 & 3373450744 & 6318588308 \\
10 & 56664 & 67884 & 22 & 9035758992 & 17654567968 \\
11 & 156570 & 195132 & &  &\\
\hline
 \end{tabular}
\end{center}
\caption{Enumeration of group elements and geodesics of Thompson's group~$F$.}
\end{table}
\end{center}

In \cite{Guba2004} Guba proves that the growth rate of the number of elements in
Thompson's group~$F$ is bounded below by   $\frac{3 + \sqrt{5} }{2} = 2.61803\dots$, which
also serves as a lower bound for the growth rate of geodesics. We can use our data to
obtain upper bounds for both growth and geodesic growth.

Since every element of length $n+m$ is the concatenation of some element of length $n$ and
some element of length $m$, it follows that $f(n+m)\leq f(n)f(m)$ and so the sequence
$f(n)$ is submultiplicative. The same is true for the sequence $g(n)$, which counts the
number of geodesics. Fekete's lemma \cite{Fekete} states that if a sequence $a(n)$ is
submultiplicative, then the sequence $a(n)^{1/n}$ is monotonically decreasing, which means
that for any fixed $n$, the number $a(n)^{1/n}$ is an upper bound on the limit of
$a(n)^{1/n}$. From this we obtain the following:

\begin{prop}
  The growth rate $\gamma:=\lim_{n \to \infty} |S(n)|^{1/n}$ for the growth function 
of Thompson's group~$F$ satisfies the bounds
\begin{align}
  \frac{3 + \sqrt{5} }{2} = 2.61803\dots
  & \leq \gamma \leq
  |S(22)|^{1/22} =2.8349398\ldots
\end{align}
  Similarly, the growth rate $\mu:=\lim_{n \to \infty} |\Gamma_n|^{1/n}$ for 
the geodesic growth function of  Thompson's group~$F$ satisfies the bounds
\begin{align}
  \frac{3 + \sqrt{5} }{2} = 2.61803\dots
  & \leq \mu \leq
  |\Gamma_{22}|^{1/22} = 2.92257\dots
\end{align}
\end{prop}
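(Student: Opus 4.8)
The plan is to establish the two displayed chains of inequalities by combining the known lower bound with an elementary submultiplicativity argument for the upper bound. The lower bounds $\tfrac{3+\sqrt5}{2}\leq\gamma$ and $\tfrac{3+\sqrt5}{2}\leq\mu$ are already supplied by Guba's theorem in \cite{Guba2004}: Guba's construction exhibits $f(n)\geq c\bigl(\tfrac{3+\sqrt5}{2}\bigr)^n$ for a positive constant $c$, whence $\gamma=\lim_n f(n)^{1/n}\geq\tfrac{3+\sqrt5}{2}$, and since every element contributes at least one geodesic, $g(n)\geq f(n)$ gives the same bound for $\mu$. So the only real content is the upper bounds.

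For the upper bounds I would first record the submultiplicativity of both sequences. If $g$ is an element of the sphere of radius $n+m$, pick a geodesic word $w$ representing $g$ of length $n+m$; its length-$n$ prefix $u$ and length-$m$ suffix $v$ are themselves geodesic (a subword of a geodesic is geodesic), so $g=\overline{u}\,\overline{v}$ with $\overline u\in S(n)$, $\overline v\in S(m)$. The map $g\mapsto(\overline u,\overline v)$ need not be well-defined without a choice, but choosing one geodesic per element makes it an injection $S(n+m)\hookrightarrow S(n)\times S(m)$, giving $f(n+m)\leq f(n)f(m)$. For geodesics the statement is cleaner: a geodesic of length $n+m$ splits uniquely as a concatenation of its length-$n$ prefix and length-$m$ suffix, both of which are geodesics, so concatenation gives an injection $\Gamma_{n+m}\hookrightarrow\Gamma_n\times\Gamma_m$ and $g(n+m)\leq g(n)g(m)$. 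Next I would invoke Fekete's lemma \cite{Fekete} in its submultiplicative form: for a submultiplicative sequence $a$ of positive reals, $\lim_n a(n)^{1/n}$ exists and equals $\inf_n a(n)^{1/n}$, so in particular $\lim_n a(n)^{1/n}\leq a(N)^{1/N}$ for every fixed $N$. Applying this with $N=22$ to $a=f$ and to $a=g$ yields $\gamma\leq f(22)^{1/22}$ and $\mu\leq g(22)^{1/22}$.

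Finally I would substitute the computed values from Table~3, namely $|S(22)|=9035758992$ and $|\Gamma_{22}|=17654567968$, and evaluate the $22$nd roots numerically to obtain $|S(22)|^{1/22}=2.8349398\ldots$ and $|\Gamma_{22}|^{1/22}=2.92257\ldots$, completing both inequality chains. There is no genuine obstacle here: the argument is a routine assembly of Guba's bound, the standard subword-of-a-geodesic observation, Fekete's lemma, and a numerical evaluation. The one point requiring a line of care is the injectivity in the $f(n+m)\leq f(n)f(m)$ step — one must fix, once and for all, a distinguished geodesic representative for each group element before defining the splitting map — but this is entirely standard and costs only a sentence.
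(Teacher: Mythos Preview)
Your proposal is correct and follows essentially the same approach as the paper: cite Guba's lower bound (together with $g(n)\geq f(n)$ for the geodesic case), establish submultiplicativity of $f$ and $g$, apply Fekete's lemma to get $\gamma\leq f(22)^{1/22}$ and $\mu\leq g(22)^{1/22}$, and plug in the computed values. You are in fact slightly more careful than the paper, which glosses over the injectivity in the $f(n+m)\leq f(n)f(m)$ step and states Fekete's lemma as giving a monotone decreasing sequence rather than an infimum.
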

In the next section we will sharpen the upper bound for the growth of elements
considerably.

\section{Algorithm B}\label{sec:AlgB}
In this section we describe a second algorithm to compute the number of elements in the
sphere of radius $n$ in Thompson's group~$F$ with standard generating set; this
algorithm runs in polynomial time and space.

Recall the forest diagram definition from the previous section. We count forest diagrams
by their length using a column-by-column construction. Such constructions have been used
to great effect in combinatorics and statistical mechanics yielding closed form solutions
(for example \cite{Temperley, BousquetMelou}), polynomial time algorithms (for example
\cite{3choice}) and exponential time algorithms that are exponentially faster than brute
force methods (for example \cite{Enting1985}). The construction we use here gives rise to
a system of recurrences which is rather cumbersome. As such, we do not give these
recurrences explicitly, but instead iterate them using Algorithm~B.

An incomplete forest diagram can be thought of as a forest diagram with all rightmost columns deleted after some point.
 In order to append a column in a ``legal'' way we only need to keep track of the labels
of the rightmost column and two numbers which measure how far the top and bottom trees are
from being ``complete''. 

\subsection{Encoding and counting binary trees}
\label{ssec btrees}
Since the forest diagrams consist of a pair of sequences of binary trees, the
starting point for our enumeration algorithm is to find an efficient encoding
of binary trees. Binary trees are counted by the Catalan numbers and there is a large
number of encodings.
 We have chosen one that naturally reflects the column-by-column construction
used.

We start by labelling leaves and internal nodes of the trees by ``N'', ``I'',
``n'' and ``i'' as shown in Figure~\ref{fig node labels}. By convention we will not label the left-most leaf.
\begin{figure}[h!]
  \begin{center}
 \includegraphics[scale=0.5]{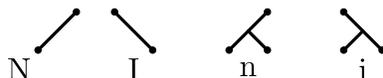}\\
  \end{center}
  \caption{We label leaves of a binary tree by ``N'' and ``I'' as
  shown. Similarly we label the internal nodes by ``n'' and ``i''. By
convention we always label the root node ``n''.}
\label{fig node labels}
\end{figure}

Reading these labels from left to right gives a word in the alphabet $\{ \mbox{n, N, i, I}
\}$ which starts with an ``n'' and alternates lower and upper case
letters. The tree in Figure~\ref{fig codeword} is encoded by the word ``nInNiInNnIiI''.
\begin{figure}[h!]
  \begin{center}
 \includegraphics[scale=0.6]{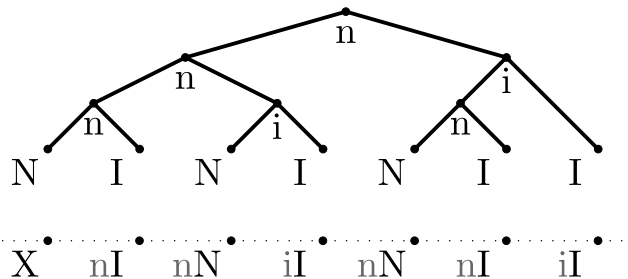}
  \end{center}
  \caption{The tree above is labeled as shown. This gives rise to the encoding
$\mbox{nInNiInNnIiI}$. For the purposes of the proof of Lemma~\ref{lem:rules} we 
prepend an N to this word. In our main algorithm, we will label the first
leaf by ``L'' or  ``X'', and delete all of the lower case labels.}
  \label{fig codeword}
\end{figure}

This labelling was used in  \cite{Felsner2008} and they proved that each tree has
a unique encoding. The following lemma is Theorem~4.4 in that paper with modified
notation.
\begin{lem}
  \label{lem:rules}
  A word, $w$, encodes a non-empty tree if and only if
\begin{enumerate}
\item it starts with ``n'',
\item ends with ``I'', 
\item alternates lowercase and uppercase letters, 
\item all of its prefixes $u$ satisfy $|u|_{\mbox{n}} + |u|_{\mbox{N}} \geq |u|_{\mbox{i}}
+ |u|_{\mbox{I}}$ (where $|u|_{\mbox{y}}$ denotes the number of occurrences of ``y'' in
$u$)
\item $|w|_{\mbox{n}} + |w|_{\mbox{N}} = |w|_{\mbox{i}} + |w|_{\mbox{I}}$.
\end{enumerate}
\end{lem}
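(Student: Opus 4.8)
The plan is to prove both directions of the characterisation by induction on the size of the tree (equivalently, the length of the word, which by property (3) and (5) equals twice the number of internal nodes). First I would set up the labelling rule precisely: reading left-to-right, an internal node is ``n'' if it is a left child (or the root) and ``i'' if it is a right child; a leaf is ``N'' if it is a left child and ``I'' if it is a right child, with the leftmost leaf left unlabelled. Prepending an ``N'' to the word (as the excerpt flags) corresponds to treating the whole tree as a left subtree hanging off a virtual parent, which makes the leftmost leaf behave uniformly and turns the ``$\geq$'' in property (4) and the ``$=$'' in property (5) into a clean counting statement: at every prefix, the number of lowercase-plus-uppercase ``left'' labels minus the ``right'' labels is nonnegative, and zero overall. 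I would phrase this via a height function $h(u) = |u|_{\mathrm n} + |u|_{\mathrm N} - |u|_{\mathrm i} - |u|_{\mathrm I}$ and observe that reading the word is a walk that goes $+1$ on a left label and $-1$ on a right label.

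For the forward direction (a tree gives a word with properties (1)--(5)), I would argue structurally: the root is ``n'' (giving (1)); the last leaf visited in the left-to-right reading is the rightmost leaf, which is a right child, hence ``I'' (giving (2)); the alternation (3) is immediate because in the reading order each node is followed by the first node of its left subtree (a leaf or internal node) and the cases force the case to flip — this is the one spot needing a careful case check on ``what comes next'' in a left-to-right DFS of a binary tree. Properties (4) and (5) would follow from interpreting $h$ as a balance between ``open left slots'' and consumed nodes: every left label opens a subtree that must eventually be closed off, so $h$ stays $\ge 0$ and returns to $0$. I would make this rigorous by induction, splitting the tree at its root into left and right subtrees and concatenating their (recursively valid) encodings with the appropriate root/leaf-case bookkeeping.

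For the converse (a word with (1)--(5) decodes to a tree), I would again induct on $|w|$, using property (4)/(5): find the shortest proper prefix $w = w' w''$ at which the height $h$ first returns to $0$ — this exists and is nonempty by (1), and is proper by minimality unless $|w|=2$ (base case ``nI'', the single leaf). The prefix $w'$ (after stripping its leading ``n'' and trailing ``I'', or reinterpreting cases appropriately) satisfies (1)--(5) and so decodes to the left subtree; the suffix $w''$ likewise decodes to the right subtree; reassembling gives a binary tree whose encoding is $w$. Uniqueness of the encoding then comes for free, since the decoding procedure is deterministic.

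The main obstacle I anticipate is the bookkeeping around case (upper/lower, left/right) as nodes alternate in the reading order, especially at the ``seam'' where one subtree's encoding ends and the next begins, and around the un-labelled leftmost leaf versus the prepended ``N'' convention. Getting a single clean invariant — the walk interpretation of $h$ together with ``lowercase = internal, uppercase = leaf, and case-letter = left/right child'' — is what makes both inductions go through uniformly; without it the argument fragments into many special cases. Since this is Theorem~4.4 of \cite{Felsner2008}, I would in practice cite it, but the self-contained proof above is the route I would take if a proof were required.
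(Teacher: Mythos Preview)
Your approach differs from the paper's, and the converse direction as written does not work. You propose to split $w$ at the first return of the height $h$ to zero and read the two pieces as the root's left and right subtrees. But the first return need not be proper when $|w|>2$: for $w=\mathrm{nNiI}$ (root with a leaf on the left and a single caret on the right) the height sequence is $1,2,1,0$, so the only return to zero is at the end. And even when the split is proper, the pieces are not the left and right subtrees: for $w=\mathrm{nInI}$ (root with a caret on the left and a leaf on the right) the split is $\mathrm{nI}\cdot\mathrm{nI}$, yet the right subtree is a single leaf with empty encoding, not $\mathrm{nI}$. The source of the confusion is that the encoding is an \emph{in-order} (left-to-right planar) reading, so the root's label sits in the middle of $w$, not at the start; your arguments for (1) and (3) (``the root is `n'\,'', ``each node is followed by the first node of its left subtree'') implicitly assume a pre-order reading. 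A first-return decomposition can be salvaged, but with different semantics --- $w'$ encodes the subtree at the \emph{leftmost} internal node and $w''$ encodes the tree with that subtree contracted to a leaf --- and the single-arch case $w''=\emptyset$ then needs its own treatment.

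The paper avoids locating the root altogether by inducting on a local move: contracting a caret whose two children are both leaves. With the prepended ``N'', such a contraction replaces a three-letter factor $\mathrm{NaI}$ in the word by the single upper-case letter $\mathrm{A}$ (where $\mathrm{a}$ is the lower case of $\mathrm{A}$), and conversely every admissible word of length $\geq 5$ contains such a factor because its upper-case subword must contain $\mathrm{NI}$. This operation is local to the word and never requires identifying any global structure, which is precisely what makes the induction clean.
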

\bp
For the purposes of the proof we prepend an initial ``N'' to all encoding words. Call a
word on $\{ \mbox{n, N, i, I} \}$ \emph{admissible} if it has an initial ``N''
and it satisfies the 5 conditions above when the initial ``N'' is deleted.

Consider the mapping $\psi_k$ that associates a binary tree of $k$ carets to
the word of length $2k+1$ obtained by starting with an ``N'' and then reading the labels
from left to right (see Figure~\ref{fig codeword}). We show by induction on $k$ that
$\psi_k$ is bijective onto admissible words of length $2k+1$.

For $k=1$, this is evident, there is a unique tree with one caret, which is mapped by
$\psi_1$ to the unique admissible word of length $3$, namely ``NnI''.

Assume that the bijection holds for $k\geq 1$ and we will show that it also holds for
$k+1$. First we check that the image $w$ of a tree $t$ with $k+1$ carets is admissible.
Let $t'$ be a tree obtained from $t$ by deleting two leaves adjacent to the same caret $c$
(so $c$ becomes a leaf in $t'$) and let $w'=\psi_k(t')$. As seen in
Figure~\ref{bij_proof}, $w$ is equal to $w'$ where an upper-case letter ``A'' is
replaced by the 3-letter factor ``NaI'', where ``a'' is the lower case of ``A''. Since
$w'$ is admissible (by induction), it follows that $w$ is also admissible.

\begin{figure}[h!]
  \begin{center}
 \includegraphics[scale=0.6]{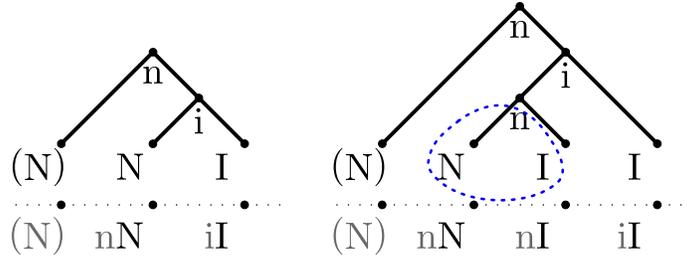}
  \end{center}
  \caption{The effect of expanding a leaf with label $\mbox{A} \in\{ \mbox{N, I} \}$ into
a caret is to replace ``A'' by ``NaI'' in the encoding word, where ``a'' is the lower case
of ``A''. }
  \label{bij_proof}
\end{figure}

Now to prove that $\psi_{k+1}$ is bijective consider an admissible word $w$ of length
$2k+3$ and we will show that it has a unique pre-image. Let $W$ be the word obtained from
$w$ by keeping only its upper-case letters. Since $W$ starts with ``N'' and ends with
``I'' it must contain a factor ``NI". This lifts to a 3-factor ``NaI'' in the word
$w$. Replacing this 3-factor by a single letter ``A'', where ``A'' is the upper-case of
``a'', we obtain a word $w'$ of length $2k+1$. It follows that $w'$ is also admissable.

Moreover one may verify (see Figure~\ref{bij_proof}) that any pre-image $t$ of $w$ must
be obtained from a pre-image $t'$ of $w'$ where the leaf corresponding to ``A'' is
replaced by a caret with two leaves below it. Since there is a unique pre-image of $w'$
(by induction), there is also a unique pre-image of $w$.
\ep

We can now count binary trees by counting admissable words in this alphabet.
Define the {\em size} of an even-length word to be its half-length.  
Following from condition~(iv) above we define the {\em excess} 
of a word, $w$, to be the quantity $\frac12 \left(|w|_N + |w|_n - |w|_I - |w|_i \right)$.
So appending ``nN'' increases the excess by $1$, appending ``nI'' or ``iN''
does not change the excess and appending ``iI'' decreases the
excess by $1$. Further, appending any of these pairs increases the size by $1$..
\begin{cor}\label{cor:incom}
A word encodes a nonempty binary tree if and only if its excess is zero.
\end{cor}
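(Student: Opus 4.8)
The plan is to read the corollary off Lemma~\ref{lem:rules}. Here ``word'' is understood to mean a word of the kind produced by the column-by-column construction, that is, one obtained by appending pairs from $\{\mathrm{nN},\mathrm{nI},\mathrm{iN},\mathrm{iI}\}$; such a word alternates lowercase and uppercase letters (condition (iii) of Lemma~\ref{lem:rules}), and the construction maintains the invariant that every prefix has nonnegative excess (condition (iv)). Some such restriction is genuinely needed: for instance $\mathrm{iInN}$ has excess zero but encodes no tree. For a \emph{nonempty} word $w$ of this form, condition (i) also holds automatically, since the length-one prefix of $w$ is a single lowercase letter which cannot be ``i'' without making that prefix have excess $-\tfrac12<0$, contrary to (iv). So by Lemma~\ref{lem:rules} such a $w$ encodes a nonempty binary tree if and only if it additionally satisfies (ii) and (v); and since (v) is, by the definition of the excess, exactly the statement ``$w$ has excess zero'', the corollary reduces to the assertion that for these words (iv) and (v) together force (ii).

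To prove that, I would argue as follows. Reading an ``n'' or an ``N'' increases the running excess by $\tfrac12$, while reading an ``i'' or an ``I'' decreases it by $\tfrac12$. Since $w$ has even length, its last letter occupies an even position and is therefore uppercase, i.e.\ ``N'' or ``I''. Let $u$ be the prefix of $w$ obtained by deleting this last letter. If the last letter were ``N'', then the excess of $w$ would equal the excess of $u$ plus $\tfrac12$, so $u$ would have excess $-\tfrac12<0$, contradicting (iv); hence $w$ ends with ``I'', which is condition (ii). Consequently $w$ satisfies all five conditions of Lemma~\ref{lem:rules} and encodes a nonempty binary tree. The reverse implication is immediate: if $w$ encodes a nonempty binary tree then condition (v) of Lemma~\ref{lem:rules} gives $|w|_{\mathrm n}+|w|_{\mathrm N}=|w|_{\mathrm i}+|w|_{\mathrm I}$, which is precisely ``excess zero''.

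The only step that is not routine bookkeeping is the short boundary argument showing that (iv) and (v) entail (ii); once that is in hand the statement really is an immediate corollary of Lemma~\ref{lem:rules}. I would also flag the degenerate case: the empty word has excess zero but corresponds to an empty tree rather than a nonempty one, so the statement should be read for nonempty words, or with the empty tree admitted as a trivial exception.
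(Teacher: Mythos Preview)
Your argument is correct and follows the same route as the paper, which simply says ``This follows from condition~(v) above.'' You are considerably more careful than the paper: you make explicit which class of words is intended, verify that conditions (i), (iii), (iv) hold automatically for words built by appending pairs, and supply the short argument that (iv)+(v) force (ii)---all of which the paper's one-line proof leaves to the reader.
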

\bp
This follows from condition~(v) above.
 \ep

Given an even length word $u$ and its excess we can decide which pairs can
be legally added to the word --- in fact there is little restriction other than when the
excess is $0$, in which case one may not append $\mbox{iN}$ or $\mbox{iI}$
since this violates condition~(iv). Thus one can construct a recurrence satisfied by
$c_{\ell,h}$, the number of valid words of length $2\ell$ (size $\ell$) and excess $h$.
\begin{itemize}
 \item There is a single valid code word of size $1$, namely ``nI''. Hence $c_{1,0}=1$.
 \item Any word with excess equal zero can be extended by
  \begin{itemize}
  \item appending ``nN'' which makes its excess 1, or
  \item appending ``nI'' which leaves its excess 0.
  \end{itemize}
  \item Any word with excess strictly greater than zero can be extended by
  \begin{itemize}
  \item appending ``nN'' which increases its excess by 1, or
  \item appending ``nI'' which leaves  its excess unchanged, or 
  \item appending ``iN'' which leaves  its excess unchanged, or 
  \item appending ``iI'' which decreases its excess by 1.
  \end{itemize}
\end{itemize}
This translates into the following recurrence.
\begin{align}
 c_{1,0} &= 1 \\
 c_{\ell,h} &= \begin{cases}
             c_{\ell-1,0}+c_{\ell-1,1} & \mbox{if $h=0$}\\
             c_{\ell-1,h+1} + 2c_{\ell-1,h} + c_{\ell-1,h-1} & \mbox{otherwise}
            \end{cases}
\end{align}
While this recurrence can be solved explicitly, we show how it may solved iteratively in
order to help explain our main result. Notice that the algorithm that we use to iterate
this recurrence does not ``look backward'' and compute $c_{\ell,h}$ in terms of $c_{\ell-1,h}$
and $c_{\ell-1,h\pm1}$, rather it ``looks forward'' and determines which coefficients
$c_{\ell+1,h'}$ gain contributions from $c_{\ell,h}$. Of course this algorithm (and our
main result) could be rewritten to be ``backward looking'' but the authors feel that this
way follows more naturally from the construction.

\vspace{3mm}

\noindent \hrulefill

\noindent \textbf{Algorithm} \textsc{CountBinaryTrees}. Outputs the number of binary trees of a desired size. 

\vspace{-3mm}
\noindent \hrulefill 

\noindent Input maximum size (half-length) $N$ and set size $\ell=0$.

\vspace{3mm}

\noindent Set $\mathtt{words}(1,0)=1$ --- $\mathtt{words}(\ell,h)$ has to store the number of words of size $\ell$ and excess $h$.

\vspace{3mm}

\noindent At the current size $\ell$ run through all possible values of
the excess $h$.

\vspace{3mm}
\noindent If the current value of $h$ is zero then (because
we can append ``nN'' and ``nI'')
\begin{itemize}
 \item increment $\mathtt{words}(\ell+1,1)$ by $\mathtt{words}(\ell,0)$, and
 \item increment $\mathtt{words}(\ell+1,0)$ by $\mathtt{words}(\ell,0)$.
\end{itemize}
\noindent Otherwise (we can append ``nN'', ``nI'', ``iN'' and ``iI'')
\begin{itemize}
 \item increment $\mathtt{words}(\ell+1,h+1)$ by $\mathtt{words}(\ell,h)$, and
 \item increment $\mathtt{words}(\ell+1,h)$ by $2\mathtt{words}(\ell,h)$, and
 \item increment $\mathtt{words}(\ell+1,h-1)$ by $\mathtt{words}(\ell,h)$.
\end{itemize}

\vspace{3mm}

\noindent After running through all possible values of the excess, increment the size $\ell$ by $1$ 
 and output $\mathtt{words}(\ell,0)$ --- the number of valid complete words of the
current size.

\vspace{3mm}

\noindent Keep iterating until the desired size is reached.

\vspace{-3mm}
\noindent \hrulefill 
\vspace{3mm}

Notice that we must iterate through all values of the size and
excess in order to compute the final enumeration. Since the excess is at most the size
for each (possibly incomplete) forest diagram, this means that the
algorithm requires $O(n^2)$ time. Once we reach a given size we can forget all the
data from shorter lengths. In fact we only need to remember the current size and the
next size. This means the algorithm requires $O(n)$ space. 

For the main algorithm below, the labels ``I'' and ``N'' correspond precisely
to the labels used in the Belk \& Brown geodesic length formula. The lower case
labels can then be ignored and we instead keep track of the excess and note
that when appending an ``N'' the excess either stays constant or increases by
$1$, and when appending an ``I'' the excess either stays constant or decreases
by $1$. Note also that the size (which was the half-length when lower-case letters are
considered) coincides with the length when the lower-case letters are ignored, so we will
 use the terminology of length (instead of size) from now on.

\subsection{Transitions in the upper forest}
\label{ssec trans}
We now describe how to construct the upper half of the forest diagram column-by-column;
the lower half is encoded by an  identical procedure. We commence every forest with an
empty column labelled ``L''. The possible transitions from a column with a given label to
the next depend only on that label, the excess and whether one is to the
left or right of the pointer. Note that the excess can only be non-zero
if the label is either ``I'' or ``N''. We now describe this transitions in detail.

We will write the last column of the current upper forest as a triple of the
label, a flag indicating whether it is left or right of the pointer, and
the excess. For example, a diagram ending in an ``L'' must
be left of the pointer, and have excess 0 and so we denote it by
$\mathtt{(L,left,0)}$. Call this triple the state of the upper diagram.
\begin{figure}[h!]
  \begin{center}
 \includegraphics[scale=0.6]{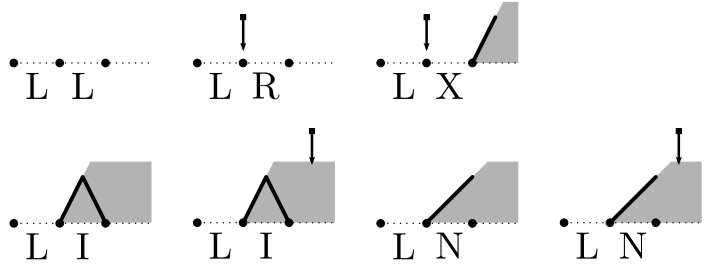}
  \end{center}
  \caption{Consider a diagram ending in ``L''. We can append new columns to the
end of this diagram as shown above. In particular, an ``L'' may be followed by
an ``L'' or ``R''. If we append an ``X'' then we must commence a tree at
the next column (indicated by the shaded region). Similarly we may start a
tree in exactly 4 different ways; either with an ``I'' or an ``N'' and with the
tree pointed or not.}
\label{fig Lappend}
\end{figure}
\begin{itemize}
\item If the upper forest ends in an ``L'' then it must be in state
$\mathtt{(L,left,0)}$. See Figure~\ref{fig Lappend}. We may append
\begin{itemize}
\item ``L'', which keeps the
diagram  in state $\mathtt{(L,left,0)}$,
\item ``R'', which means the pointer is now to the left of the current position, so the
diagram is now in state $\mathtt{(R,right,0)}$,
\item ``X'', so again the pointer has been passed, and the diagram is in state
$\mathtt{(X,right,0)}$,
\item ``I'', which starts a tree with codeword ``nI'' so the state becomes
$\mathtt{(I,left,0)}$,
\item ``I'' and put the pointer on the tree (just started by this ``I'') and so move
to the state $\mathtt{(I,right,0)}$,
\item ``N'', which starts a tree with codeword ``nN'' so the state becomes
$\mathtt{(N,left,1)}$,
\item ``N'' and put the pointer on the tree (just started by this ``N'') and so
move to the state $\mathtt{(N,right,1)}$.
\end{itemize}
 
 \begin{figure}[h!]
  \begin{center}
 \includegraphics[scale=0.6]{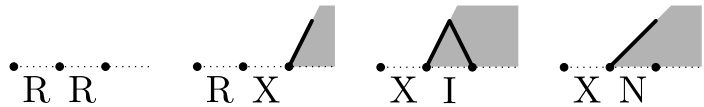}
  \end{center}
  \caption{If a diagram ends in ``R'' then we can either append another
``R'' or an ``X''. On the other hand, if the diagram ends in ``X'' then we must
start a tree with either ``I'' or ``N''.}
\label{fig RXappend}
\end{figure}

 \item If the upper forest ends in an ``R'' then it must be in state
$\mathtt{(R,right,0)}$. See Figure~\ref{fig RXappend}. We may append
\begin{itemize}
 \item ``R'' which keeps the diagram in state $\mathtt{(R,right,0)}$,
 \item ``X'' which adds a gap immediately to the left of a new tree, and moves to
$\mathtt{(X,right,0)}$.
\end{itemize}

\item If the upper forest ends in an ``X'' then it is in state $\mathtt{(X,right,0)}$ and
we must start a new tree. See Figure~\ref{fig RXappend}. We may append
\begin{itemize}
\item ``N'' and move to the state $\mathtt{(N,right,1)}$,
\item ``I'' and  move to the state $\mathtt{(I,right, 0)}$.
\end{itemize}

\begin{figure}[h!]
  \begin{center}
   \includegraphics[scale=0.6]{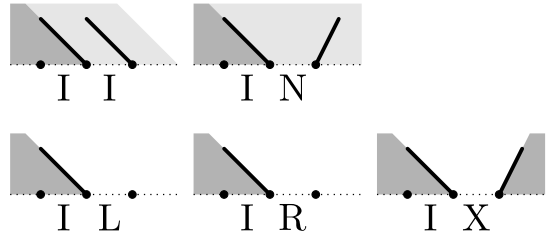}
  \end{center}
  \caption{If a diagram ends in ``I'' of excess 0 then we can continue the
tree with an ``I'' or an ``N'' as described in Section~\ref{ssec btrees} (note that an
internal node labelled by ``n'' will be created). If we are to the left of the pointer
then we can append an empty column labelled ``L''.  If we are to the right of the pointer
then we may append an empty column labelled ``R'' or ``X''.}
\label{fig Iappend}
\end{figure}

\item See Figure~\ref{fig Iappend}. If the diagram is in state $\mathtt{(I,left,0)}$ then
we may
\begin{itemize}
 \item continue the current tree by appending ``N'' or ``I'' and so move to
$\mathtt{(N,left,1)}$ or $\mathtt{(I,left,1)}$,
 \item finish the tree by appending ``L'' arriving in state $\mathtt{(L,left,0)}$.
\end{itemize}
\item Similarly, if the diagram is in state $\mathtt{(I,right,0)}$ then
we may
\begin{itemize}
 \item continue the current tree by appending ``N'' or ``I'' and so move to
$\mathtt{(N,right,1)}$ or  $\mathtt{(I,right,0)}$,
 \item finish the tree by appending ``R'' or ``X'' and so arrive in state
$\mathtt{(R,right,0)}$ or $\mathtt{(X,right,0)}$.
\end{itemize}

\item Finally, if the diagram is in state $\mathtt{(N,p,h)}, \mathtt{(I,p,h)}$ with $h>0$
and $\mathtt{p}=$ $\mathtt{left}$ or $\mathtt{right}$ then we are in the middle of constructing
a tree (as described in Section~\ref{ssec btrees}). Note that one cannot have label ``N''
with excess zero, nor may we change~$p$. We may now
\begin{itemize}
\item add ``N'' and increase the excess by 1, moving to
$\mathtt{(N,p,h+1)}$, 
\item add ``N'' leaving the excess unchanged giving
$\mathtt{(N,p,h)}$,
\item add ``I'' leaving the excess unchanged giving
$\mathtt{(I,p,h)}$
\item add ``I'', decreasing the excess by 1,  moving to
$\mathtt{(I,p,h-1)}$
\end{itemize}

\end{itemize}

Again, these constructions are easily turned into an algorithm and we refer the reader to
the pseudocode for \texttt{UpdateLeft(state)} and \texttt{UpdateRight(state)} in
Appendix~\ref{app B}.

One might also attempt to write out a set of recurrences, but they are quite cumbersome;
we now require several sets of coefficients since one must keep track of the excess,
the final label and whether one is to the left or right of the pointer. This
becomes extremely unwieldy for the full algorithm when one keeps track of the states of
the upper and lower forests. Because of this we were unable to write out the full
recurrences; instead we keep the recurrences implicit and iterate them using our
algorithms.

\subsection{The full algorithm}
The full algorithm is very similar to Algorithm~\textsc{CountBinaryTrees}, 
except that now the states of the upper and lower forests must be remembered and all
possible pairs of transitions must be determined. The set of pairs of transitions is very
nearly the Cartesian product of the set of transitions for the upper forest and the
set of transitions for the lower  forest; the only restriction is that we must prevent the
creation of common carets.

A caret is created in the upper (lower) forest immediately above (below) a gap when a
transition is made from a gap labelled ``I'', ``N'', or ``X'' to a gap labelled ``I''
(note that it is not possible to have a transition from ``R'' to ``I''). Thus a pair of
common carets is created when a transition is made from a pair of gaps labelled  $\II$
from a pair of gaps labelled by $\LL$, $\LN$, $\LX$, $\NL$, $\NN$, $\NX$, $\XL$, $\XN$ or
$\XX$.

We iterate through the recurrence starting from a diagram that consists of a single
empty column with gaps labelled by $\LL$. At each iteration we examine all the possible
states and consider all possible transitions from those states (avoiding common-carets)
to new states using the rules described above. The contribution of diagrams of the current
length is then added to the appropriate total for diagrams with new states with weight
updated according to the new labels and Table~1. A forest diagram is complete when it
finishes with an empty column labeled by $\RR$. This should be seen to be very similar
to Algorithm~\textsc{CountBinaryTrees} except that the transitions are more complicated.

This enumerates all forest diagrams starting with an empty column labelled by
$\LL$ and ending with another empty column labelled by $\RR$. This, unfortunately, is not
the precise set of diagrams that Belk \& Brown considered. They considered diagrams
without any empty columns at either end; Proposition~\ref{correct} shows how to
correct the output of the algorithm.

\vspace{3mm}

\noindent \hrulefill

\noindent \textbf{Algorithm B.} Counts forest diagrams starting with $\LL$ and ending with $\RR$.

\vspace{-3mm}
\noindent \hrulefill 

\noindent Input maximum length $N$ and set length $\ell=2$.

\vspace{3mm}

\noindent Set $\mathtt{totals(2,(L,left,0),(L,left,0))}=1$ --- this counter stores the
number of diagrams of a given weight ending in a column with given labels and
excess.

\vspace{3mm}

\noindent At the current length $\ell$ run through all the states of the upper and
lower forests, $\sigma,\tau$, for which $\mathtt{totals}(\ell,\sigma,\tau) \neq 0$.

\vspace{3mm}

\noindent Let $c = \mathtt{totals}(\ell,\sigma,\tau)$. This is the number of diagrams of
length $\ell$ that end in a column in state $\sigma, \tau$.

\vspace{3mm}

\noindent Given the current state of the upper forest compute the new states that may be
reached according to the rules in Section~\ref{ssec trans}. Do similarly for the lower
forest.

\vspace{3mm}

\noindent For each pair of transitions $(\sigma', \tau')$ that does not create a common
caret, compute the new length $\ell'$ using Table~1, and 
increment $\mathtt{totals}(\ell',\sigma',\tau')$ by $c$.

\vspace{3mm}

\noindent When all the states of the current length have been examined, output the number
of complete diagrams, namely $\mathtt{totals}(n,
\mathtt{(R,right,0)},\mathtt{(R,right,0)}))$.

\vspace{3mm}

\noindent Keep iterating until the desired length is reached.

\vspace{-3mm}
\noindent \hrulefill 

\vspace{3mm}

We give a more detailed account of this algorithm in pseudocode for
\texttt{CountForestDiagrams()} in Appendix~\ref{app B}. This main routine relies on the
routines \texttt{UpdateLeft()} and \texttt{UpdateRight()} to compute the possible
transitions. The cost analysis of the algorithm is given below in
Proposition~\ref{prop:timespace}.

As noted above the forest diagrams enumerated by this algorithm begin with any positive
number of empty columns labelled  ``$\LL$'' and then end in any positive number of empty
columns labelled ``$\RR$'', whereas those of Belk \& Brown have no such empty columns.
Thankfully we can easily obtain the correct count as follows.

Let \[F(q)=f_0+f_1q+f_2q^2+\ldots\] be the formal power series with coefficients $f_n=$
the number of forests of weight $n$ with {\em no} leading ``$\LL$'' or tailing ``$\RR$''
columns. Let \[H(q)=h_4q^4+h_5q^5+h_6q^6+\ldots\] be the formal power series with
coefficients $h_n=$ the number of forests of weight $n$ which have a positive number (at
least 1) leading ``$\LL$'' and a positive number of tailing ``$\RR$'' columns (which are
 counted by our algorithm). Note that $h_0=h_1=h_2=h_3=0$ since such forests must have
weight at least 4. Then
\begin{prop}
\label{correct}
\[F(q)=\left(\frac{1-q^2}{q^2}\right)^2H(q)\]
\end{prop}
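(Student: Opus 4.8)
The plan is to set up a bijection-style generating-function argument that relates forest diagrams with leading/trailing empty columns to those without. First I would observe that a forest diagram counted by $H(q)$ decomposes uniquely as a block of $a \geq 1$ empty ``$\LL$'' columns, followed by a ``core'' forest diagram counted by $F(q)$ (one with no leading ``$\LL$'' or trailing ``$\RR$'' columns), followed by a block of $b \geq 1$ empty ``$\RR$'' columns. The key point is that this decomposition is legitimate: appending empty ``$\LL$'' columns on the left and empty ``$\RR$'' columns on the right is always a legal operation (it creates no common carets, since empty columns contain no carets), and conversely every diagram counted by our algorithm arises this way with the core being exactly the unique maximal sub-diagram obtained by stripping all leading ``$\LL$'' and trailing ``$\RR$'' columns. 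I should check that the core is nonempty and is itself a valid Belk--Brown diagram --- i.e. that stripping the empty end columns cannot expose new empty ``$\LL$'' or ``$\RR$'' columns (it cannot, because an ``$\LL$'' column is empty with the pointer to its right, so the first non-``$\LL$'' column is either non-empty or is an ``$\RR$''/``$\XX$''-type column, and symmetrically on the right; one must just confirm the degenerate cases, e.g. the core being a single column, are handled correctly by the weight bookkeeping).

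Next I would translate this decomposition into generating functions. Each empty ``$\LL$'' column has weight $W(\LL) = 2$ by Table~1, so a block of $a \geq 1$ such columns contributes weight $2a$, giving the generating function $\sum_{a \geq 1} q^{2a} = \frac{q^2}{1-q^2}$; the same holds for the trailing ``$\RR$'' columns since $W(\RR) = 2$ as well. Because weight is additive across columns, the decomposition gives
\[
H(q) = \frac{q^2}{1-q^2} \cdot F(q) \cdot \frac{q^2}{1-q^2} = \left(\frac{q^2}{1-q^2}\right)^2 F(q).
\]
Solving for $F(q)$ yields $F(q) = \left(\frac{1-q^2}{q^2}\right)^2 H(q)$, which is exactly the claimed identity. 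I would also note as a sanity check that this is consistent with $h_0 = h_1 = h_2 = h_3 = 0$: the lowest-weight diagram counted by $H$ has one ``$\LL$'' column, one ``$\RR$'' column, and a nonempty core of weight at least $0$, but in fact the core, being a genuine Belk--Brown diagram, has weight at least $0$ with the minimal nonempty core contributing enough that the total is at least $4$ --- and indeed $\frac{q^2}{1-q^2}\cdot\frac{q^2}{1-q^2} = q^4 + \ldots$ already forces $h_n = 0$ for $n < 4$ regardless of $F$.

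The main obstacle, and the step deserving the most care, is verifying that the decomposition into (leading ``$\LL$'' block)/(core)/(trailing ``$\RR$'' block) is genuinely a bijection onto the set of diagrams enumerated by the algorithm --- in particular that the ``core'' obtained by stripping is always a bona fide forest diagram in the Belk--Brown sense (admissible, no common carets, extreme columns nonempty in the Belk--Brown convention) and that reattaching empty columns never interferes with the common-caret condition at the seam between the block and the core. The common-caret issue is the delicate one: one must confirm that the transition from the last ``$\LL$'' column into the first column of the core, and from the last column of the core into the first ``$\RR$'' column, is always among the legal transitions listed in Section~\ref{ssec trans} and never forced to be an ``$\II$''-creating transition from an ``$\LL$''/``$\RR$'' pair --- which follows because a column adjacent to an ``$\LL$'' or ``$\RR$'' empty column carries no caret in that row. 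Once this structural lemma is in hand, the generating-function manipulation is immediate.
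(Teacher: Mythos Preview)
Your proposal is correct and follows essentially the same approach as the paper: both arguments use that each empty $\LL$ or $\RR$ column contributes weight $2$, so that attaching a block of $a\geq 1$ of them multiplies the generating function by $\sum_{a\geq 1}q^{2a}=\frac{q^2}{1-q^2}$. The only difference is cosmetic: the paper introduces an intermediate series $G(q)$ (diagrams with leading $\LL$ columns but no trailing $\RR$ columns) and factors in two steps, $G(q)=\frac{q^2}{1-q^2}F(q)$ and $H(q)=\frac{q^2}{1-q^2}G(q)$, whereas you do both sides at once; your extra care about common carets at the seam is more than the paper bothers with (and is not really needed, since empty columns contain no carets), but it is not wrong.
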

\bp
Let $G(q)$ be the formal  power series with coefficients $g_n=$ the number of forests of
weight $n$ with no tailing ``$\RR$'' columns, but one or more leading ``$\LL$'' columns.
Note that $g_0=g_1=0$ since such forests have weight at least 2.  Then
$g_n=f_{n-2}+f_{n-4}+\ldots$ since an empty column labelled  ``$\LL$'' increases the
length of a forest diagram by $2$. That is, we have $f_{n-2}$ with just one leading
``$\LL$'' , $f_{n-4}$ with two leading ``$\LL$'' , and so on.

Thus
\begin{alignat*}{7}
 G(q) &= &  g_2q^2 & + & g_3q^3 & + & g_4q^4 &+ & g_5q^5 & + & g_6q^6 &+ & \ldots\\[2ex]
      &= &  f_0q^2 & + & f_1q^3 & + & f_2q^4 & + & f_3q^5 & + & f_4q^6 & + & \ldots\\
      &  &         &   &        & + & f_0q^4 & + & f_1q^5 & + & f_2q^6 & + & \ldots\\
      &  &         &   &        &   &        &   &        & + & f_0q^6 & + & \ldots
\end{alignat*}
\vspace{-5ex}
\begin{align*}
 \hspace{11ex} &= (q^2+q^4+q^6+\ldots) F(q) = \frac{q^2}{1-q^2} F(q)
\end{align*}
A similar argument shows that \[H(q)=G(q)\frac{q^2}{1-q^2} \] from which the result easily follows.
\ep

 \subsection{Cost analysis}
 
\begin{prop} \label{prop:timespace}
 Algorithm B computes the number of elements in the spheres of radius up to $n$ in time
$O(n^3)$ and space $O(n^2)$.
\end{prop}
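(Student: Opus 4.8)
The plan is to bound the number of distinct states the algorithm can occupy at any given length, bound the work done per state, and bound the number of length-values that must be processed. Recall that a state of the upper (resp. lower) forest is a triple $\mathtt{(label, p, h)}$ where the label lies in a fixed finite set $\{\mathtt{L,R,X,I,N}\}$, $\mathtt{p}\in\{\mathtt{left,right}\}$, and the excess $h$ is a nonnegative integer. The key observation is that in a forest diagram of weight at most $n$, each column contributes at least $1$ to the weight (by Table~1 every entry is $\geq 1$), so there are at most $n$ columns; since the excess is bounded by the number of carets built so far, which is bounded by the number of columns, we have $0 \le h \le n$ in both forests. Hence the number of possible states of a single forest is $O(n)$, and the number of pairs of states $(\sigma,\tau)$ is $O(n^2)$.

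Next I would analyse one iteration of the outer loop (fixed current length $\ell$). For each of the $O(n^2)$ pairs $(\sigma,\tau)$ with $\mathtt{totals}(\ell,\sigma,\tau)\neq 0$, the algorithm computes the set of admissible transitions of the upper forest and of the lower forest using the rules of Section~\ref{ssec trans}. Inspecting those rules, each state has at most a constant number of outgoing transitions (at most $7$, from the ``$\mathtt{L}$'' case), so there are $O(1)$ transition pairs $(\sigma',\tau')$ to consider; for each we check the common-caret condition (a constant-time table lookup), read the column weight off Table~1 in constant time to get the new length $\ell'$, and perform one addition into $\mathtt{totals}(\ell',\sigma',\tau')$. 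Thus processing one pair $(\sigma,\tau)$ costs $O(1)$, and one iteration of the outer loop costs $O(n^2)$. Since the outer loop runs over lengths $\ell = 2,3,\dots,n$, the total time is $O(n)\cdot O(n^2) = O(n^3)$.

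For the space bound, note that when the new length $\ell'$ is computed from $\ell$ the weight increment $W(c)$ lies between $1$ and $4$, so contributions from length $\ell$ only ever reach lengths $\ell+1,\dots,\ell+4$. Therefore it suffices to keep the arrays $\mathtt{totals}(\cdot,\sigma,\tau)$ for a sliding window of at most $5$ consecutive lengths; each such array has $O(n^2)$ entries (one per state pair), giving $O(n^2)$ space in total. Everything else — the current length, the current state pair, the lists of transitions — is $O(1)$ or $O(n)$. Finally, the post-processing of Proposition~\ref{correct} to pass from $H(q)$ to $F(q)$ is a single convolution of the output sequence with the fixed rational factor $\left(\tfrac{1-q^2}{q^2}\right)^2$, costing $O(n)$ additional time and space, which does not affect the bounds. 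I expect the only mildly delicate point to be justifying uniformly that the excess never exceeds $O(n)$ — i.e. tying the excess to the caret count and the caret count to the (weight-bounded) number of columns — since this is what collapses the a priori unbounded state space to size $O(n)$; the rest is a routine accounting of constants.
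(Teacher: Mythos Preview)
Your proposal is correct and follows essentially the same argument as the paper: both bound the number of state pairs by $O(n^2)$ (you make this explicit by bounding the excess via the column count), observe that each state pair has only $O(1)$ outgoing transitions, use the weight increment $1\leq W(c)\leq 4$ to reduce storage to a five-length sliding window, and multiply by the $O(n)$ iterations of the outer loop. If anything, your version spells out more carefully why the state space has size $O(n^2)$ than the paper does.
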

\bp
The final generating function transformation takes constant time and space, so our main
concern is the complexity of enumerating a diagram with at most $n$ columns. Since each
column has weight at least one, each element of the ball of radius $n$ will be
represented by such a diagram.

When a new column is appended to a diagram the weight changes by at least 1 and
at most 4. Thus incomplete diagrams of weight $n$ will only contribute to
incomplete diagrams of weights between $n+1$ and $n+4$. Once we have scanned
through all diagrams of weight $n$ and computed all possible transitions we will
not need them again and they can be thrown away. In this way we only need to
store information about diagrams of five different weights at any given time.
Thus the algorithm needs to store $O(n^2)$ coefficients. During the
$n^\mathrm{th}$ iteration of the algorithm we scan through each of the $O(n^2)$ diagrams of
weight $n$ and compute the possible transitions; this takes $O(n^2)$ operations.
Since a diagram of weight $n$ may contain at most $n$ columns the algorithm
requires $O(n^3)$ operations.\ep

\subsection{Growth rate}
\begin{center}
\begin{table}[h!]
\begin{center}
\begin{tabular}{|c|c||c||c|}
 \hline
 $n$ & $f(n)$ & $f(n)^{1/n}$ & $f(n)/f(n-1)$ \\
 \hline
0 & 1 & $\circ$ & $\circ$ \\
1 & 4 & 4 & 4 \\
2 & 12 & 3.464\dots & 3 \\
3 & 36 & 3.301\dots & 3 \\
4 & 108 & 3.223\dots & 3 \\
5 & 314 & 3.157\dots & 2.907\dots \\
6 & 906 & 3.110\dots & 2.885\dots \\
7 & 2576 & 3.070\dots & 2.843\dots \\
8 & 7280 & 3.039\dots & 2.826\dots\\
9 & 20352 & 3.011\dots & 2.795\dots\\
10 & 56664 & 2.987\dots & 2.784\dots \\
\hline
20 & 1257901236 & 2.850\dots & 2.688\dots \\
\hline
50 & 6015840076078706884412 & 2.726\dots & 2.627\dots \\
\hline
100 & $\underbrace{5023\cdots5868}_{43\;\mathrm{digits}}$ & 2.673\dots &
2.6184\dots\\
\hline
200 & $\underbrace{3158\cdots3328}_{85\;\mathrm{digits}}$ & 2.645\dots
&2.618034\dots \\
\hline
500 & $\underbrace{7798\cdots8648}_{210\;\mathrm{digits}}$ & 2.628\dots &
2.6180339887498949\dots\\
\hline
1000 & $\underbrace{7579\cdots7676}_{419\;\mathrm{digits}}$ & 2.623\dots &
differs from $\frac{3+\sqrt{5}}{2}$ by about $10^{-32}$ \\
\hline
1500 & $\underbrace{7367\cdots9566}_{628\;\mathrm{digits}}$ & 2.62167\dots &
differs from $\frac{3+\sqrt{5}}{2}$ by about $10^{-48}$ \\
\hline
\end{tabular}\end{center}
\caption{Table of the number of elements of geodesic length $n$. By Fekete's
Lemma, $f(n)^{1/n}$ is an upper bound on the growth rate. We note that the
ratio of successive terms appears to converge rapidly to Guba's conjectured
value for the growth rate; unfortunately we have not been able to prove
that it converges.}
\end{table}
\end{center}

Using this algorithm we have been able to compute $f(n)$ for $0 \leq n \leq 1500$ using modest
computer resources. We give some of the resulting data in Table~2; the full sequence has
been published as sequence A156945 in the Online Encyclopedia of Integer
Sequences \cite{OEIS}. We coded the algorithm in \texttt{gnu-c++} using the Standard
Template Library \texttt{map} container to store the coefficients indexed by their
labels\footnote{We also implemented the algorithm using \texttt{hashmap}; this used
roughly the same amount of computer memory. Despite being theoretically faster than
\texttt{map} we found that it slowed the algorithm in practice.} and we used the
\texttt{CLN} number library\footnote{Available from {\texttt{http://www.ginac.de/CLN/}} at
time of writing.} to handle large integer arithmetic\footnote{An alternative approach
would have been to use modular arithmetic and reconstruct the results using the Chinese
remainder theorem. Since $f(n)$ grows exponentially with $n$, this approach would require
us to run the algorithm roughly $O(n)$ times using different primes. This would appear to
be a way  of adapting this algorithm to run on a cluster of computers.}.

As noted in the previous section, the sequence $f(n)$ is submultiplicative and so Fekete's
lemma \cite{Fekete} implies that the growth rate, $\lim_{n\to\infty} f(n)^{1/n}$, exists
and is bounded above by $f(n)^{1/n}$ for any $n$. Our data then gives a sequence of
rigorous upper bounds on the growth rate and combining with Guba's lower bound
\cite{Guba2004} gives the following result.

\begin{prop}
 The growth rate $\gamma:=\lim_{n \to \infty} f(n)^{1/n}$ of Thompson's group satisfies
\begin{align}
  \frac{3+\sqrt{5}}{2}=2.61803\ldots & \leq  \gamma \leq
f(1500)^{1/1500} =
2.62167\ldots
\end{align}
\end{prop}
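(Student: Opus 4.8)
The plan is to obtain the two inequalities from entirely separate sources. The lower bound is quoted directly: Guba's theorem \cite{Guba2004} establishes $\gamma \geq \tfrac{3+\sqrt5}{2}$, and nothing further is needed for that half. The upper bound is the substantive half, and it follows from submultiplicativity of $f$ together with the explicit value of $f(1500)$ produced by Algorithm~B.

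For the upper bound I would first record that $f$ is submultiplicative: any element of geodesic length $n+m$ factors as the product of an element of length $n$ with an element of length $m$ (split a geodesic word representing it), whence $f(n+m) \leq f(n)f(m)$. Fekete's lemma \cite{Fekete} then guarantees that $\gamma = \lim_{n\to\infty} f(n)^{1/n}$ exists and in fact equals $\inf_n f(n)^{1/n}$; in particular $\gamma \leq f(n)^{1/n}$ for every fixed $n$, so specializing to $n=1500$ yields $\gamma \leq f(1500)^{1/1500}$. What remains is to certify that the integer $f(1500)$ fed into this bound is the one Algorithm~B actually computes correctly. This rests on Section~\ref{sec:AlgB}: Algorithm~B enumerates by weight those forest diagrams that begin with a positive block of $\LL$ columns and end with a positive block of $\RR$ columns, and Proposition~\ref{correct} converts the resulting series $H(q)$ into $F(q)=\sum_n f_n q^n$ via multiplication by $\left(\tfrac{1-q^2}{q^2}\right)^2$; Proposition~\ref{prop:timespace} ensures the iteration up to length $1500$ terminates within the stated time and space. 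Thus $f(1500)$ is exactly the coefficient of $q^{1500}$ in $F(q)$.

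The only genuine obstacle is numerical rather than conceptual. One must be certain the $628$-digit value of $f(1500)$ is exact — this is precisely why the implementation uses exact integer arithmetic (the \texttt{CLN} library) rather than floating point — and then one must bound $f(1500)^{1/1500}$ rigorously from above. The clean way to do this is to exhibit an explicit rational (or dyadic) number $b$ with $b < 2.621671$ and $b^{1500} \geq f(1500)$; since both sides of the latter inequality are concrete integers once $b$ is rationalised, this reduces to a single finite exact comparison. With that verification in place, chaining $\tfrac{3+\sqrt5}{2} \leq \gamma \leq f(1500)^{1/1500} = 2.62167\ldots$ is immediate, and the proposition follows.
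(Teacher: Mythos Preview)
Your proposal is correct and follows essentially the same approach as the paper: Guba's lower bound is quoted, submultiplicativity of $f$ is combined with Fekete's lemma to get $\gamma \leq f(n)^{1/n}$ for every $n$, and one specializes to $n=1500$ using the output of Algorithm~B. Your statement of Fekete's lemma (limit equals $\inf_n f(n)^{1/n}$) is in fact more accurate than the paper's phrasing, and your remarks on certifying the $1500$th root via an exact integer comparison add rigour that the paper leaves implicit.
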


We have plotted the sequence of upper bounds, $f(n)^{1/n}$, together with the lower bound
in Figure~\ref{fig boundplot} (left). It is not unreasonable to extrapolate these upper bounds
and reach the following conjecture:
\begin{conj}
 The growth rate of Thompson's group~$F$ with standard generating set
is~$\tfrac{3+\sqrt{5}}{2}$.
\end{conj}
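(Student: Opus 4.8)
Guba's bound \cite{Guba2004} already gives $\gamma\ge\tfrac{3+\sqrt5}{2}$, so the whole content of the conjecture is the matching upper bound $\gamma\le\tfrac{3+\sqrt5}{2}$. Since $\tfrac{3+\sqrt5}{2}$ is the larger root of $x^{2}-3x+1$, its reciprocal is $q_{c}:=\tfrac{3-\sqrt5}{2}$, and the claim is equivalent to: the generating function $F(q)=\sum_{n}f(n)q^{n}$ has radius of convergence at least $q_{c}$. By Proposition~\ref{correct}, $F$ and $H$ differ only by the factor $\bigl((1-q^{2})/q^{2}\bigr)^{2}$, which is meromorphic with finite singularities only at $q=\pm1$; since $\gamma>1$ forces their common radius of convergence to lie strictly below $1$, the two series have the same radius of convergence and it suffices to work with $H$. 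The plan is: (i) realise $H(q)$ as a resolvent of the column-by-column transfer operator underlying Algorithm~B; (ii) reduce ``radius of convergence $\ge q_{c}$'' to a spectral bound for that operator; (iii) prove the bound by exhibiting an explicit positive sub-eigenvector.

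For step~(i), encode a state of the upper (resp. lower) forest as a triple $(\mathrm{label},\mathrm{flag},h)$ exactly as in Section~\ref{ssec trans}, with $h\ge0$, $h=0$ unless the label is ``I'' or ``N'', and $h\ge1$ when it is ``N''. Let $\mathcal S$ be the countable set of admissible pairs $s=(\sigma,\tau)$ of such triples, and for $q>0$ let $T_{q}$ be the nonnegative operator on $\mathbb R^{\mathcal S}$ whose $(s,s')$ entry is $q^{\,W}$ when the double transition $s\to s'$ is allowed by the rules of Section~\ref{ssec trans} \emph{and} creates no pair of common carets ($W$ being the weight of the new column read off Table~1), and $0$ otherwise. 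With $\mathbf u_{q}=q^{2}\mathbf e_{\mathrm{LL}}$ and $\mathbf v_{q}=\mathbf e_{\mathrm{RR}}$ (the $q^{2}$ for the initial $\LL$ column) one has the formal identity $H(q)=\mathbf u_{q}^{\mathsf T}(I-T_{q})^{-1}\mathbf v_{q}=\sum_{k\ge0}\mathbf u_{q}^{\mathsf T}T_{q}^{\,k}\mathbf v_{q}$, the $k$-th term summing $q^{\,\mathrm{weight}}$ over forest diagrams with $k+1$ columns that begin with $\LL$ and end with $\RR$. Since $T_{q}$ is nonnegative with entries monotone in $q$ and is irreducible on the part of $\mathcal S$ reachable between the boundary states, a standard transfer-operator argument (with the usual care needed for infinite state spaces) identifies the radius of convergence of $H$ with $\sup\{q>0:\ r(T_{q})<1\}$, where $r(\cdot)$ is the spectral radius, equivalently here the convergence parameter of the nonnegative operator. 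This reduces the conjecture to: \emph{$r(T_{q})<1$ for all $0<q<q_{c}$.}

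For step~(iii), the device I would use is to produce, for each $q<q_{c}$, a strictly positive function $\rho=\rho_{q}$ on $\mathcal S$ and a constant $\lambda=\lambda(q)<1$ with $T_{q}\rho\le\lambda\rho$ pointwise; iterating and summing a geometric series then yields $r(T_{q})\le\lambda<1$ and, directly, convergence of $H(q)$. The natural ansatz is $\rho(s)=a_{(\sigma,\tau)}\,\beta^{\,h_{\sigma}+h_{\tau}}$ for a \emph{finite} family of positive constants $a_{(\sigma,\tau)}$ indexed by the label/flag data of $s$, and a single rate $\beta\in(0,1)$ to be tuned. Because appending an ``N'' changes the excess by $0$ or $+1$ and appending an ``I'' by $0$ or $-1$, each of the finitely many transition types of Section~\ref{ssec trans} turns $T_{q}\rho\le\lambda\rho$ into a \emph{single} inequality, polynomial in $q,\beta,\lambda$ and the $a_{(\sigma,\tau)}$, with the excess appearing only through the common factor $\beta^{h}$. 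One then has a finite feasibility problem, checkable by hand or by computer algebra, and the goal is to certify a solution with $\lambda<1$ for every $q<q_{c}$, the slack closing up exactly at $q=q_{c}$, where the relevant critical sub-block should have Perron root $1$, pinning the growth rate at $\tfrac{3+\sqrt5}{2}$.

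The step I expect to be the genuine obstacle is the regime in which \emph{both} forests are interior to trees (labels in $\{\text{``I''},\text{``N''}\}$, both excesses positive). The weight table is symmetric but \emph{not} additive: were it additive we would need $W(\mathrm{up},\mathrm{low})=w(\mathrm{up})+w(\mathrm{low})$ with $w(\text{``I''})=w(\text{``L''})=w(\text{``R''})=w(\text{``X''})=1$ and $w(\text{``N''})=2$ forced by the diagonal entries, but then the column with upper label ``I'' and lower label ``N'' would carry weight $3$ rather than the tabulated $4$. Hence $T_{q}$ is genuinely not a tensor product of an upper and a lower operator, the $a_{(\sigma,\tau)}$ must depend on the joint label pair, and the two excesses interact through $W$. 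The cruder route of replacing each column weight by an additive lower bound $w(\mathrm{up})+w'(\mathrm{low})$, which would decouple the forests, provably fails: a $k$-caret tree can be built whose encoding contains only a single ``N'', so its decoupled weight is $\approx k\,w(\text{``I''})$, while the constraint $w(\text{``I''})+w'(\text{``I''})\le W(\text{``I''},\text{``I''})=2$ keeps $w(\text{``I''})$ far too small for the decoupled single-forest growth to stay below $\sqrt{\tfrac{3+\sqrt5}{2}}$. So the coupling must be carried honestly, and it is quite possible the rigid geometric ansatz is too inflexible near $q_{c}$; plausible remedies are to let $\rho$ depend on $h$ through a polynomial-times-geometric form, to use two distinct rates for a well-chosen pair of coordinates of the joint excess in the doubly-interior regime, or ultimately to majorise the column-by-column sum directly rather than through a clean sub-eigenvector. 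One should also bear in mind that the numerical evidence that $F$ is not differentiably finite suggests the dominant singularity of $H$ at $q=q_{c}$ is not of classical algebraic type, so $r(T_{q})$ need not cross $1$ analytically; but only the \emph{location} $q_{c}$ enters the growth rate, so controlling the strict inequality on $(0,q_{c})$ is all that is required, and that is where essentially all of the difficulty sits.
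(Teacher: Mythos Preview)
The paper does not prove this statement; it is explicitly labelled a \emph{conjecture}, and the surrounding text offers only numerical evidence: the rigorous upper bound $f(1500)^{1/1500}\approx 2.62167$ from Fekete's lemma, the observed rapid convergence of $f(n)/f(n-1)$ and of $(f(2n)/f(n))^{1/n}$ to $\tfrac{3+\sqrt5}{2}$, and series-analysis indications of a simple pole at $q_c=\tfrac{3-\sqrt5}{2}$. There is no analytic argument in the paper for the upper bound $\gamma\le\tfrac{3+\sqrt5}{2}$.

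Your proposal is therefore not competing with a proof in the paper but attempting to go beyond it. As a strategy it is coherent: the transfer-operator realisation of $H(q)$ is precisely what Algorithm~B iterates, and bounding the radius of convergence via a positive sub-eigenvector $T_q\rho\le\lambda\rho$ is the standard route for nonnegative operators of this kind. You are correct that the non-additivity of Table~1 (your $W(\mathrm{I},\mathrm{N})=4\ne 3$ check) blocks a tensor-product factorisation of $T_q$, and correct that replacing $W$ by an additive lower bound decouples the forests but gives away too much. However, you have not actually carried out step~(iii): you describe an ansatz $\rho(s)=a_{(\sigma,\tau)}\beta^{h_\sigma+h_\tau}$, reduce to a finite feasibility system, isolate the doubly-interior regime as the hard case, and list fallback options, without demonstrating that \emph{any} choice of parameters achieves $\lambda<1$ all the way up to $q=q_c$. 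You also gloss over the step identifying the radius of convergence of $H$ with $\sup\{q:r(T_q)<1\}$, which for an infinite state space needs more than irreducibility. What you have written is an honest and well-structured research outline, and you flag its incompleteness yourself, but it is not a proof---and since the paper contains no proof either, the conjecture remains open on both counts.
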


\begin{figure}[h!]
 \begin{center}
 \bt{ccc}
 \includegraphics[height=5.5cm]{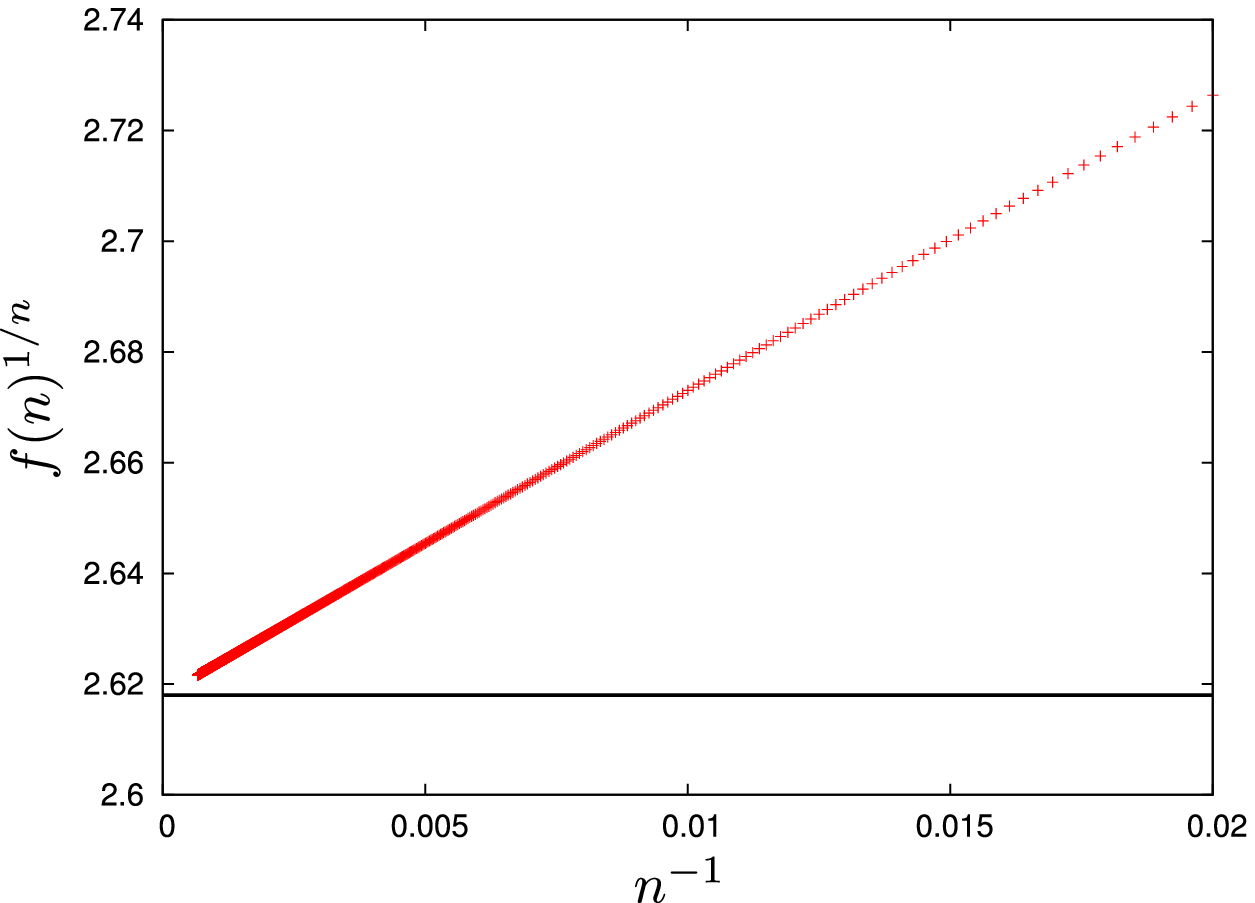} & $\;\;\;$ & \includegraphics[height=5.5cm]{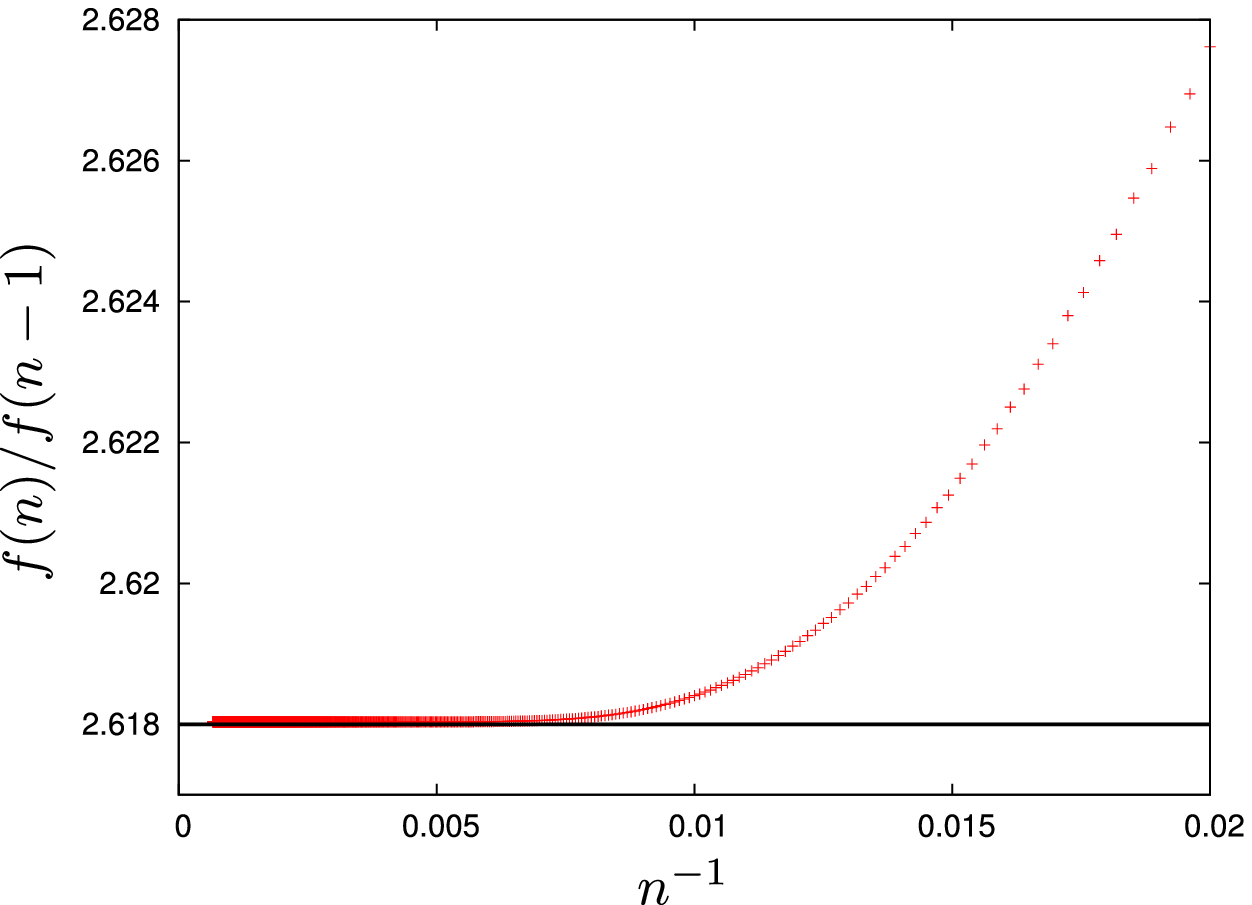} \\
\et
 \end{center}
  \caption{ (left) A plot of $f(n)^{1/n}$ versus $n^{-1}$  for $50 \leq n\leq 1500$.
For comparison we have also plotted Guba's lower bound of $\tfrac{3+\sqrt{5}}{2}$.
 (right) A plot of  the ratios $f(n)/f(n+1)$ for $50 \leq n\leq 1500$,
 and Guba's lower bound. For large $n$ the ratios are indistinguishable from Guba's lower
bound.}
\label{fig boundplot}
\end{figure}

To further support this conjecture we have examined the sequence $\left(f(2n)/f(n)\right)^{1/n}$
which must converge to the growth rate and we observe that it converges extremely rapidly to Guba's
lower bound. Additionally, in Figure~\ref{fig boundplot} (right) we have plotted
$f(n)/f(n-1)$. The limit of this ratio, if it exists, must be the growth rate. While we
have not been able to prove convergence it does appear to converge rapidly and is nearly
indistinguishable from Guba's bound. Note that Guba computed this ratio for $n=9$ in
\cite{Guba2004}, to obtain a conjectured upper bound of $2.7956043\ldots$.

Numerical analysis of the series \cite{Elder2009} 
 indicates that the corresponding generating function $F(z) = \sum f(n) z^n$ has an
isolated simple pole at $\frac{2}{3+\sqrt{5}}$ implying that
\begin{align}
 f(n) = A \left( \tfrac{3+\sqrt{5}}{2} \right)^n + \mbox{exponentially smaller
terms}
\end{align}
with $A \approx 8.02374\dots$. The first correction term appears to be
approximately $O(2.432^n)$. We estimated this term by studying the
asymptotics of the generating function $(1-3z+z^2)F(z)$; this polynomial factor
cancels the dominant contribution to the asymptotics from the (observed) simple pole at
$\frac{2}{3+\sqrt{5}}$.

\section{Outlook}\label{sec:outlook}
In this paper we have described two algorithms for computing the size of the sphere of
radius $n$ of Thompson's group with standard generating set. The first of these runs in
 exponential time and polynomial space, is potentially generalisable to other groups, and also computes the
geodesic growth series.
The second runs in polynomial time and space and we have used it to compute the first 1500 terms of the
growth series. We then used Fekete's lemma to compute upper bounds for the growth
rate from this data. 
This work suggests that the growth rate of~$F$ is exactly equal to $\frac{3+\sqrt{5}}{2}$.
This
strongly indicates that the normal forms described in \cite{Guba1997, Guba2004} have very
nearly geodesic length. More precisely, we believe that the word length of the
normal form of a typical element differs from its geodesic length only by $O(1)$.

We analysed the sequence $f(n)$ using our data and we have found some indication (using
series analysis techniques such as differential approximants \cite{Guttmann1989}) that
the corresponding generating function contains square-root singularities and so is
unlikely to be rational. Further, despite having the first fifteen hundred terms of the
sequence, we have been unable to conjecture a rational, algebraic or differentiably finite
generating function. In particular we used \texttt{Guess} package developed by
Manuel Kauers \cite{Kauers} to search through a wide range of possible
recurrences\footnote{The sequence in question does not satisfy any homogeneous linear
recurrence of order $r$ with polynomial coefficients of degree at most $d$, where $0\leq r
\leq R$ and $0 \leq d \leq D$ and $(R,D)$ taken from the list $(749,0), (165,7), (99,13),
(61,22), (52,26), (35,39), (43,32), (26,52), (23,60), (17,81), (10,134), (4,298)$. This is
a roughly exhaustive search of the all such recurrences that can be detected with the
first 1500 terms. Note that any rational, algebraic or differentiably finite sequence
must also satisfy a homogeneous linear recurrence with polynomial coefficients.}.
Unfortunately this search did not find any candidates. While this does not prove that the
generating function lies outside these classes,  it does rule out the possibility that the
generating function is simple (satisfying a recurrence of low degree or order).

It is relatively straightforward to extend Algorithm B to generate elements of a fixed
geodesic length uniformly at random in polynomial time. Algorithm A was derived from an
approximate enumeration algorithm \cite{GARM} which can also be used to sample large
elements of the group. We are currently investigating how these random generation
techniques might be used (in the same spirit as \cite{Burillo2007}) to explore properties
of the group that are still beyond analytic techniques such as its amenability.

\appendix
\section{Appendix: Pseudocode}
\label{appendix}
Throughout the appending we will use the following notations:
\begin{itemize}
  \item $x \leftarrow y$: set the variable $x$ to value $y$,
 \item $x = y$: the boolean operation that returns ``true'' if $x$ and $y$ are the
same and otherwise returns ``false'', and
  \item $x += y$: increment the variable $x$ by $y$.
\end{itemize}
We use $x \leftarrow y$ to distinguish the assignment of a value to a variable
from the test for equality. While we could use $x \leftarrow x+y$ instead of $x += y$ we
feel that the later easier to read when using long and descriptive variable names.
 
\subsection{Algorithm A}
\label{app A}
The function \texttt{ComputeGeod}  recursively outputs all geodesic words of length $N$.
\begin{algorithm}
 \caption{\texttt{ComputeGeod($w$,$N$)} --- output all geodesics of length~$N$
with prefix $w$. \label{alg computegeod}}
  \SetKwFunction{ComputeGeod}{ComputeGeod}
  \KwIn{Geodesic word $w$, Maximum length $N$}
  \lIf{$|w|=N$}{ \KwOut{$w$} }
  \Else{
  \lForEach{$x \in d_+(w)$}{ \ComputeGeod($wx$, $N$)}
  }
\end{algorithm}
Of course this function has the drawback that it generates a list of the geodesics that
must be stored either in memory or on disk before it can be processed to give the number
of elements. The following function avoids that problem.
\begin{algorithm}
 \caption{\texttt{ComputeSphere($w$,$N$)} --- Find the size of $S(n)$}
  \SetKwFunction{ComputeGeod}{ComputeGeod}
  \KwIn{Geodesic word $w$, Maximum length $N$}
  
  $s \leftarrow 0$;
  
  \If{$|w|=N$}{ $s \leftarrow \prod_{i=1}^n \frac{1}{|d_-(w_i)|}$ }
  \Else{
  \lForEach{$x \in d_+(w)$}{ $s += $\ComputeGeod($wx$, $N$)}
  }

  \KwOut{$s$}
\end{algorithm}
Calling \texttt{ComputeSphere($\epsilon, N$)}, where $\epsilon$ is the empty word, will
return the size of the sphere of radius $N$. It recursively computes the geodesics of
length $N$ starting with prefix $w$ and instead of storing them in a list, it returns the
sum of their contributions. Alternatively a non-recursive procedure
\texttt{NextGeodesic()} is given below. This computes the first geodesic after $w$ of
length at most $N$.

\begin{algorithm}
 \caption{\texttt{NextGeodesic($w$, $N$)} --- find the first geodesic after $w$ of length
  at most $N$}
 \KwIn{Geodesic word $w$, Maximum length $N$}
  \tcp{For this algorithm let $d_+(w) = \es$ when $|w|=N$.}

   \If{$d_+(w) \neq \es$}{
    $x \leftarrow $ first generator in $d_+(w)$
    
   \KwOut{$wx$}
   }

   \While{$d_+(w) = \es$}{
    $x \leftarrow $ last letter of $w$

    Delete last letter of $w$.

    \If{$x \neq $ last generator in $d_+(w)$}{
      $y \leftarrow $ first generator after $x$ in $d_+(w)$.
      
      \KwOut{$wy$}
    }
    \ElseIf{$|w|=0$}{ \KwOut{``No more geodesics''} }
   }
\end{algorithm}

\subsection{Algorithm B}
\label{app B}
We give the pseudocode for our main algorithm divided into three different functions.
Given the current state of the upper (or lower) forest diagram that is left of the
pointer, the function \texttt{UpdateLeft()} returns the possible states of the diagrams
produced by a valid transition (as described in Section~\ref{ssec trans} above).
Similarly, given the current state of the upper (or lower) forest diagram that is right
of the pointer, the function \texttt{UpdateRight()} returns the possible states of the
diagrams produced by a valid transition.

Finally, \texttt{CountForestDiagrams()} enumerates all forest diagrams according to their
weight (the geodesic length of the group elements they represent). It starts from the
empty diagram with gaps labelled by $\LL$. At each iteration it runs through all the
pairs of upper and lower states that have been reached and determines which pairs of
states can be reached by valid transitions using \texttt{UpdateLeft()} and
\texttt{UpdateRight()}. The transitions of the upper and lower forests are nearly
independent of each other; the only restriction is that we avoid creating common carets
and they are easily avoided. The weight of the diagram can then be updated using the
information in Table~1 and the appropriate counters can be updated. At the end of each
iteration we output the number of completed diagrams of the current weight (which are
those ending in gaps labelled $\RR$.

\begin{algorithm}
  \caption{\texttt{UpdateLeft(state)} --- Return the set of states that can be
reached from the current state when left of pointer}
  \label{alg update lor}
  \tcp{State is left of pointer by assumption}
  \KwIn{State of half-column $\mathtt{(label, left, h})$}

  \tcp{NewStates will be the set of states reached from the current state}
  NewStates $\leftarrow \es$.
  
  \If{$\mathtt{label} = \mathtt{L}$} {
    Add $\mathtt{(L, left, 0)}$ to NewStates \tcp*{another gap}
    
    Add $\mathtt{(N, left, 1)}, \mathtt{(I,left,0)}$ to NewStates \tcp*{start a tree}
    
    Add $\mathtt{(N, right, 1)}, \mathtt{(I,right,0)}$ to NewStates \tcp*{pointer \& start
    tree}
  
    Add $\mathtt{(R,right,0)}$ to NewStates \tcp*{pointer \& gap}
    
    Add $\mathtt{(X,right,0)}$ to NewStates \tcp*{pointer \& gap and start new tree next}
  }
  \If{$\mathtt{label} = \mathtt{N}$ or ($\mathtt{label}=\mathtt{I}$ and $\mathtt{h}>0$)}{
      Add $\mathtt{(N,left,h+1)}, \mathtt{(N,left,h)}, \mathtt{(I,left,h)}$ and
      $\mathtt{(I,left,h-1)}$ to NewStates
      \tcp*{Continue current tree}
    }
  \If{$\mathtt{label}=\mathtt{I}$ and $\mathtt{h}=0$}{
    Add $\mathtt{(N,left,1)}$ and $\mathtt{(I,left,0)}$ to NewStates \tcp*{continue tree}
    
    Add $\mathtt{(L,left, 0)}$ to NewStates \tcp*{finish current tree}

    \tcc{Note that one cannot finish a tree and immediately have the pointer,
    so the current state cannot be followed by a state labelled ``R'' or ``X''}
  }
  \KwOut{NewStates}
\end{algorithm}

\begin{algorithm}
  \caption{\texttt{UpdateRight(state)} --- Return the set of states that can be
reached from the current state when right of pointer}
  \label{alg update ror}
  \tcp{State is right of pointer by assumption}
  \KwIn{State of half-column $\mathtt{(label, right, h})$}

  \tcp{NewStates will be the set of states reached from the current state}
  NewStates $\leftarrow \es$.
  
  \If{$\mathtt{label} = \mathtt{R}$}{
    Add $\mathtt{(R,right,0)}$ to NewStates  \tcp*{another gap}
    
    Add $\mathtt{(X,right,0)}$ to NewStates \tcp*{another gap, start new tree next}
  }
  \If{$\mathtt{label} = \mathtt{X}$}{
    Add $\mathtt{(N,right,1)}$ and $\mathtt{(I,right,0)}$ to NewStates  \tcp*{start a
    tree}
  }
  \If{$\mathtt{label} = \mathtt{N}$ or ($\mathtt{label}=\mathtt{I}$ and $\mathtt{h}>0$)}{
      Add $\mathtt{(N,right,h+1)}, \mathtt{(N,right,h)}, \mathtt{(I,right,h)}$ and
      $\mathtt{(I,right,h-1)}$ to NewStates
    \tcp*{Continue current tree}
  }
  \If{$\mathtt{label}=\mathtt{I}$ and $\mathtt{h}=0$}{
    Add $\mathtt{(N,right,1)}$ and $\mathtt{(I,right,0)}$ to NewStates \tcp*{continue
    tree}
    
    Add $\mathtt{(R,right, 0)}$ to NewStates \tcp*{finish current tree}

    Add $\mathtt{(X,right, 0)}$ to NewStates \tcp*{finish current tree, start new tree
    next}
  }
  \KwOut{NewStates}
\end{algorithm}

\begin{algorithm}
 \caption{\texttt{CountForestDiagrams($M$)} --- Count forest diagrams of weight at most
$M$.}
 \label{alg poly main}
  \SetKwFunction{UpdateLeft}{UpdateLeft}
  \SetKwFunction{UpdateRight}{UpdateRight}
  \SetKwFunction{Weight}{Weight}

 \KwIn{Maximum length $M$}

  \tcc{$\mathtt{totals(n,\sigma,\tau)}$ stores the number of diagrams of weight $n$,
  with upper diagram in state $\sigma$ and lower diagram in state $\tau$. Initially all
  are zero except the following}
 
  $\mathtt{totals(2,(L,left,0),(L,left,0))} \leftarrow 1$

  \For{$n \leftarrow 2$ \KwTo $M-1$}{
  \ForEach{$(\sigma,\tau)$ with $\mathtt{totals(n,\sigma,\tau)} \neq 0$} {
  \tcp{The following produces sets of new upper and lower forests}
    \If{$\sigma$ is left of upper pointer} {
      upper-set $\leftarrow$ \UpdateLeft{$\sigma$}
    }
    \Else{
      upper-set $\leftarrow$ \UpdateRight{$\sigma$}
    }
    \If{$\tau$ is left of lower pointer} {
      lower-set $\leftarrow$ \UpdateLeft{$\tau$}
    }
    \Else{
      lower-set $\leftarrow$ \UpdateRight{$\tau$}
    }
    
    \tcp{Construct new diagrams in states $\sigma',\tau'$ from all possible pairs of
  transitions}
    \ForEach{ $(\sigma', \tau') \in$ upper-set $\times$ lower-set} {
    \tcp{We must check new columns for common carets.}
    \If{ ($\sigma'$.label $= \tau'$.label = $\mathtt{I}$) and
    ($\sigma$.label $\neq \mathtt{I}$) and
    ($\tau$.label $\neq \mathtt{I}$) }{
	\tcp{reject new state as it produced common caret}
    }
    \Else{
    \tcp{no common caret so keep new state with updated weight}
      \tcp{\Weight{} computes the change in weight using Table~1}
      $\mathtt{totals}(n$+\Weight{$\sigma'$.label, $\tau'$.label}, $\sigma',\tau') +=
\mathtt{totals}(n,\sigma,\tau)$
    }
    }
    \KwOut{ $(n+1, \mathtt{totals}(n+1, \mathtt{(R,right,0)},\mathtt{(R,right,0)}))$ }
    }
    }
\end{algorithm}

\end{document}